\documentclass[a4paper,11pt,reqno]{amsart}
\usepackage[a4paper, left=0.8in, right=0.7in, top=0.7in, bottom=0.7in]{geometry}
\usepackage{amsfonts}
\usepackage{dsfont}
\usepackage{amsmath}
\usepackage{array}
\usepackage{amssymb}
\usepackage{mathrsfs}
\usepackage{array}
\usepackage{comment}
\usepackage{csquotes}
\usepackage{enumitem}
\usepackage{mdframed}
\usepackage{xurl}
\usepackage{todonotes}
\usepackage{stackrel}

\newmdtheoremenv{lem}{Lemma}
\newmdtheoremenv{cor}{Corollary}
\newmdtheoremenv{rem}{Remark}
\usepackage{color}
\usepackage{multirow}
\allowdisplaybreaks

\usepackage[dvipsnames]{xcolor}
\usepackage[colorlinks = true,
            linkcolor = blue,
            urlcolor  = gray,
            citecolor = blue,
            anchorcolor = blue]{hyperref}
\renewcommand\eqref[1]{(\ref{#1})} %Need with hyperref

\numberwithin{equation}{section}
\theoremstyle{plain}
\newtheorem{theorem}{Theorem}[section]
\newtheorem{proposition}[theorem]{Proposition}
\newtheorem{corollary}[theorem]{Corollary}
\newtheorem{lemma}[theorem]{Lemma}
\theoremstyle{definition}
\newtheorem{definition}[theorem]{Definition}
\newtheorem{remark}[theorem]{Remark}

\newtheorem*{assumptionA}{Assumption (A)}

\usepackage{xurl} % allows breaks at - and _
\usepackage{ragged2e}

\begin{document}

   \title[]
   {Sharp Homogeneous Gagliardo--Nirenberg Inequalities with Applications to
Normalized Solutions for a Generalized MMT-Type Equation}

\author[A. Esfahani]{Amin Esfahani}
\address{
  Amin Esfahani:
  \endgraf
  Department of Mathematics
  \endgraf
Nazarbayev University, Astana, Kazakhstan
  \endgraf
  {\it E-mail address:} {\rm  amin.esfahani@nu.edu.kz}
  }

\author[M. Karazym]{Mukhtar Karazym}
\address{
  Mukhtar Karazym:
  \endgraf
  Private Institution Nazarbayev University Research Administration, Astana, Kazakhstan
  \endgraf
\&
  \endgraf
School of  Artificial Intelligence and Data Science
\endgraf
Astana IT University, Astana, Kazakhstan
\endgraf
  {\it E-mail address:} {\rm  mukhtar.karazym@nu.edu.kz}
  }

\keywords{sharp Gagliardo--Nirenberg inequality, optimizer, normalized solution}
\subjclass[2020]{Primary 46E35; Secondary 26D10, 35A15, 35J60, 35R11.}

\begin{abstract} 
In this paper, we prove the existence of optimizers for a class of sharp homogeneous Gagliardo--Nirenberg inequalities, extending the result of Bellazzini, Frank, and Visciglia \cite{Bellazzini2014}. As an application, we establish the existence of normalized solutions to the associated Euler--Lagrange equations. In particular, the case $p=2$ includes the stationary equation arising
from the Majda--McLaughlin--Tabak (MMT) model \cite{Majda1997}.

\end{abstract}
\maketitle
\setcounter{tocdepth}{3}
\setcounter{secnumdepth}{3}
%\tableofcontents

\section{Introduction}

The study of sharp constants and optimizers is not merely a question of
functional inequalities. Sharp Gagliardo--Nirenberg inequalities play an important role in the analysis
of global well-posedness, blow-up, and stability for nonlinear evolution
equations; see, for example, \cite{Kenig2006, Martel2002, weinstein1983nonlinear}. They also serve as a fundamental tool in the variational construction of
normalized solutions.

Bellazzini, Frank, and Visciglia \cite{Bellazzini2014} proved that, under the
assumptions
$$
1<p,q<\infty,\qquad 0<s_1\le  s_2,\qquad
\frac{s_1}{s_2}<  \theta <  1,
$$
where $\theta$ is determined by the scaling relation
$$
\frac{1}{q}-\frac{s_1}{d}
=
(1-\theta)\frac{1}{p}
+
\theta\left(\frac{1}{2}-\frac{s_2}{d}\right),
$$
the sharp constant
\begin{equation}\label{eq:GN-BFV}
C_{\mathrm{BFV}}
:=
\sup_{\substack{
\phi\in \dot H^{s_2}(\mathbb{R}^d)\cap L^p(\mathbb{R}^d)\\
\phi\neq 0
}}
\frac{
\|D^{s_1}\phi\|_{L^q}
}{
\|\phi\|_{L^p}^{1-\theta}
\|D^{s_2}\phi\|_{L^2}^{\theta}
}.
\end{equation}
is attained. Here and throughout the paper, $D^s=(-\Delta)^{s/2}$ denotes the homogeneous
Fourier multiplier with symbol $|\xi|^s$.

The limiting case $s_1=0$, which is not covered by the
assumptions of \cite{Bellazzini2014}, was studied in several
particular settings both before and after this work. For $d\geq 2$,
under the specialization
$$
d\geq 2, \qquad s_1=0,\qquad s_2=1,\qquad q=2t,\qquad p=t+1,
$$
with $t>0$, an explicit formula for the sharp constant was obtained by Del Pino and Dolbeault 
\cite{DelPino2002}. The case
$$
s_1=0
\qquad\text{and}\qquad
s_2=1
$$
was treated by Agueh \cite{Agueh2008}, who determined the sharp
constant explicitly. Later, for
$$
s_1=0
\qquad\text{and}\qquad
0<s_2<1,
$$
Zhang \cite[Theorem 1.2]{Zhang2021} proved the existence of an optimizer.
Also in later work, when
$$
s_1=0,\qquad p=2,\qquad s_2>0,
$$
Lenzmann and Sok \cite{Lenzmann2021} subsequently established a sharp
Fourier rearrangement principle for a broad class of
$(\mathrm{pseudo}\text{-})$differential operators of arbitrary order
with radial Fourier multipliers. As an application, for the specialization
$$
s_1=0,\qquad p=2,\qquad s_2>0,
$$
they proved that, when $2<q<q_\star$ is an even integer in the Sobolev-subcritical
range, every optimizer is radially symmetric and real-valued, up to
translation and multiplication by a constant phase. Here $q_{\star}$ is defined by
$$
q_{\star}= \begin{cases}\dfrac{2 d}{d-2 s_2}, & 0<s_2<\dfrac{d}{2}, \\[1.5ex] \infty, & s_2 \geq \dfrac{d}{2}.\end{cases}
$$

To formulate the main objective of the paper, we first introduce the following
standing assumption on the parameters.

\begin{assumptionA}\label{ass:A}
The parameters satisfy
\begin{enumerate}
\item $1<p,q<\infty$;

\item $0\le r\le s_1$ and $s_2>0$;

\item there exists $0<\theta<1$ such that
\begin{equation}\label{eq:scaling-relation}
\frac{1}{q}-\frac{s_1-r}{d}
=
(1-\theta)\left(\frac{1}{p}-\frac{r}{d}\right)
+
\theta\left(\frac{1}{2}-\frac{r+s_2}{d}\right);
\end{equation}

\item the following case-dependent conditions hold:
$$
\begin{cases}
s_1-2r<\theta s_2,
& \text{if }\, 0\le r<\dfrac{s_1}{2}, \\
s_2>\dfrac{d}{2}-\dfrac{d}{p}
\quad\text{and}\quad
2r-s_1<\dfrac{d}{p},
& \text{if }\, \dfrac{s_1}{2}\le r\le s_1.
\end{cases}
$$
\end{enumerate}
\end{assumptionA}

Under \textbf{Assumption A}, the main objective of this paper is to prove that the
sharp constant
\begin{equation}\label{eq:GN-EK}
C_{*}
:=\sup_{\phi\neq0}
\frac{
\|D^{s_1-r}\phi\|_{L^q}
}{
\|D^r \phi\|_{L^p}^{1-\theta}
\|D^{r+s_2}\phi\|_{L^2}^{\theta}
}
\end{equation}
is attained. The supremum in \eqref{eq:GN-EK} is taken over all nonzero admissible
$\phi\in\mathcal{S}'(\mathbb{R}^d)$, modulo polynomials, such that
$$
D^r\phi\in L^p(\mathbb{R}^d),\qquad
D^{r+s_2}\phi\in L^2(\mathbb{R}^d).
$$

When $r=0$, \eqref{eq:GN-EK} reduces to
\eqref{eq:GN-BFV}.
The further special case $s_1=0$ gives the standard
Gagliardo--Nirenberg inequality
$$
\sup_{\phi\neq0}
\frac{
\|\phi\|_{L^q}
}{
\|\phi\|_{L^p}^{1-\theta}
\|D^{s_2}\phi\|_{L^2}^{\theta}
}.
$$
We refer to \eqref{eq:GN-EK} as a homogeneous Gagliardo--Nirenberg inequality. Here the word ``homogeneous'' refers to the use of the homogeneous Fourier multipliers $D^\sigma$. 

Recently, Hajaiej and Su \cite{hajaiej2026} proved that, for
$$
d\ge  3,\qquad 0<r<1,\qquad
\frac{2d}{d-2r}<q<\frac{2d}{d-2},
$$
the best constant
\begin{equation}\label{eq:GN-by-HS2026}
C_{\mathrm{HS}}:=\sup_{\substack{
\phi\in \dot H^{r}(\mathbb{R}^d)\cap \dot H^1(\mathbb{R}^d)\\ \phi\neq 0}}
\frac{
\|\phi\|_{L^q}
}{
\|D^r\phi\|_{L^2}^{1-\theta}
\|D\phi\|_{L^2}^{\theta}
}
\end{equation}
is attained, where
$$
\theta
=
\frac{q(d-2r)-2d}{2q(1-r)}\in(0,1).
$$
Taking
$$
s_1=r,\qquad s_2=1-r,\qquad p=2
$$
in \eqref{eq:GN-EK}, we can see that \eqref{eq:GN-by-HS2026} is a particular case of \eqref{eq:GN-EK}

As an application of the sharp Gagliardo--Nirenberg inequality
\eqref{eq:GN-EK}, our next objective is to study 
critical points of the energy functional
$$
E(\psi)
:=
\frac{1}{2}\int_{\mathbb{R}^d}|D^{r+s_2}\psi|^2 dx
-
\frac{1}{q}\int_{\mathbb{R}^d}|D^{s_1-r}\psi|^q dx
$$
under the prescribed constraint
$$
\|D^r\psi\|_{L^p}^p=\lambda.
$$
The maximizer obtained in \eqref{eq:GN-EK} gives rise to solutions of the
Euler--Lagrange equation. 
\begin{equation}\label{eq:general-Euler-Lagrange}
D^{2(r+s_2)}\psi
-
D^{s_1-r}\left(
|D^{s_1-r}\psi|^{q-2}D^{s_1-r}\psi
\right)
+
\omega D^r\left(
|D^r\psi|^{p-2}D^r\psi
\right)
=0,
\end{equation}
where $\omega>0$ is the Lagrange multiplier associated with the constraint.

In particular, if $r=0$ and $p=2$ in \eqref{eq:general-Euler-Lagrange}, the stationary equation
\begin{equation}\label{eq:stand-wave}
D^{2s_2}\psi+\omega \psi=D^{s_1}(|D^{s_1}\psi|^{q-2}D^{s_1}\psi)
\end{equation}
associates with the following NLS  equation
\begin{equation}\label{eq:nls}
i u_t+D^{2s_2}u=D^{s_1}(|D^{s_1}u|^{q-2}D^{s_1}u), \qquad x\in\mathbb{R}^d,
\end{equation}
through the standing-wave ansatz
$u(x,t)=e^{-i\omega t}\psi(x)$. By choosing
$$
d=1,\qquad
q=4,
\qquad
s_1=\frac{\beta}{4},
\qquad
s_2=\frac{\alpha}{2},
$$
we recover the MMT model, introduced in \cite{Majda1997}
as a one-dimensional model for dispersive wave turbulence.

It was further studied numerically by Cai et al. \cite{Cai2001}, who
investigated dispersive wave turbulence, coherent structures, and turbulent
cycles in the MMT model. Zakharov et al.
\cite{Zakharov2001} investigated the influence of wave collapses
and quasisolitons in wave turbulence.  More recently, Panthee, Patterson, and Wang \cite{panthee2026}
investigated the well-posedness of the initial value problem for the MMT model,
while Germain, La, and Zhang \cite{Germain2025} studied the associated
kinetic MMT equation. The first author and Muslu \cite{Esfahani2026} numerically
studied standing waves, stability, blow-up for a
nonlocal nonlinear Schrödinger equation of MMT type.

We briefly mention other special cases of \eqref{eq:general-Euler-Lagrange}
which are related to existing models. If
$$
r=0,\qquad s_1=0,\qquad s_2=s\in(0,1),
$$
then \eqref{eq:general-Euler-Lagrange} reduces to
$$
(-\Delta)^s\psi+\omega|\psi|^{p-2}\psi=|\psi|^{q-2}\psi.
$$
In particular, for $p=2$, this becomes
$$
(-\Delta)^s\psi+\omega\psi=|\psi|^{q-2}\psi.
$$
This fractional stationary equation includes, at the level of stationary
profiles and up to normalization constants, the optical soliton models studied
by Chen et al. \cite{chen2018optical} in the Kerr case $q=4$, and by
Stephanovich and Olchawa \cite{stephanovich2022} in the quintic case $q=6$.

Related unconstrained scalar-field equations have been studied extensively. 
The classical second-order local scalar-field equation was studied by
Berestycki and Lions \cite{Berestycki1983}.
Fractional analogues were considered by Chang and Wang \cite{Chang2013} and by
Alves, Figueiredo, and Siciliano \cite{Alves2019}. 
Higher-order local scalar-field equations have also been studied: the
biharmonic case was treated by Mederski and Siemianowski \cite{Mederski2023},
and the polyharmonic case by Cannone, Cingolani, and Mederski \cite{Cannone2026}.

The present paper is instead concerned with normalized solutions under a generalized mass constraint. In this
direction, Cingolani, Gallo, and Tanaka \cite{Cingolani2021} studied normalized
solutions of fractional scalar-field equations of the form
$$
(-\Delta)^s\psi+\mu\psi=g(\psi),
\qquad
\int_{\mathbb{R}^d}|\psi|^2 dx=\lambda.
$$
This is closely related to the special case $r=s_1=0$, $p=2$ of our
framework.

\subsection{Main results}\label{subsection}
Our first main result concerns the sharp constant $C_*>0$ in the homogeneous Gagliardo--Nirenberg inequality
\begin{equation}\label{eq:GN-sharp}
\|D^{s_1-r}\phi\|_{L^q}
\le 
C_*
\|D^r\phi\|_{L^p}^{1-\theta}
\|D^{r+s_2}\phi\|_{L^2}^{\theta}
\end{equation}
for all admissible $\phi$.
\begin{theorem}\label{thm:GN-general}

Under Assumption (A), the sharp constant
\begin{equation}\label{eq:best-const-general}
C_*=
\sup_{\phi\neq0}
\frac{
\|D^{s_1-r}\phi\|_{L^q}
}{
\|D^r \phi\|_{L^p}^{1-\theta}
\|D^{r+s_2}\phi\|_{L^2}^{\theta}
}
\end{equation}
is attained. Here, the supremum is taken over all nonzero 
$\phi\in\mathcal{S}'(\mathbb{R}^d)$, modulo polynomials, such that
$$
D^r\phi\in L^p(\mathbb{R}^d),\qquad
D^{r+s_2}\phi\in L^2(\mathbb{R}^d).
$$
\end{theorem}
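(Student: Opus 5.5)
The plan is to reduce to the Bellazzini--Frank--Visciglia framework through the substitution $u=D^r\phi$, and then run a concentration-compactness argument in the spirit of \cite{Bellazzini2014}.

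\textbf{Reformulation and scaling normalization.} With $u=D^r\phi$, the quotient in \eqref{eq:best-const-general} becomes
$$
\frac{\|D^{s_1-2r}u\|_{L^q}}{\|u\|_{L^p}^{1-\theta}\|D^{s_2}u\|_{L^2}^{\theta}}.
$$
The order $\sigma:=s_1-2r$ is positive if $r<s_1/2$ and nonpositive if $s_1/2\le r\le s_1$. The relation \eqref{eq:scaling-relation} is exactly the condition that this quotient is invariant under the dilations $u\mapsto u(\cdot/\mu)$. It is also trivially invariant under $u\mapsto cu$ and under translations.

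\textbf{Step 1: finiteness of $C_*$.}
\begin{itemize}
\item If $\sigma>0$, interpolate $D^{\sigma}u$ between $L^p$ and $\dot H^{s_2}$ via the Fourier side and Sobolev embedding, as in \cite{Bellazzini2014}. The hypothesis $\sigma<\theta s_2$ guarantees the interpolation exponent lies strictly inside the admissible range.
\item If $\sigma\le 0$, then $D^{\sigma}$ is a Riesz potential of order $-\sigma\ge0$. By Hardy--Littlewood--Sobolev, $\|D^{\sigma}u\|_{L^q}\lesssim\|u\|_{L^{m}}$ with $1/m=1/q-\sigma/d$. This requires $-\sigma<d/p$, which is precisely the assumption $2r-s_1<d/p$. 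The quantity $\|u\|_{L^m}$ is then controlled by the standard inequality $\|u\|_{L^m}\lesssim\|u\|_{L^p}^{1-\theta}\|D^{s_2}u\|_{L^2}^\theta$. That inequality needs $m$ to lie strictly between $p$ and the Sobolev exponent of $\dot H^{s_2}$, which is where $s_2>d/2-d/p$ enters.
\end{itemize}

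\textbf{Step 2: maximizing sequence and nonvanishing.} Take a maximizing sequence and normalize it by dilation and scalar multiplication so that $\|u_n\|_{L^p}=\|D^{s_2}u_n\|_{L^2}=1$ and $\|D^{\sigma}u_n\|_{L^q}\to C_*$. To rule out vanishing, I would prove a refined estimate of the form
$$
\|D^\sigma u\|_{L^q}\lesssim \|u\|_{L^p}^{a}\|D^{s_2}u\|_{L^2}^{b}\,\Big(\sup_{y}\|\chi(\cdot-y)\,u\|_{L^{p}}\Big)^{c},\qquad c>0,
$$
or an equivalent Lieb-type lemma. One route is a Littlewood--Paley/Besov improvement, $\|D^\sigma u\|_{L^q}\lesssim \|u\|^{1-\theta'}\|u\|_{\dot B^{-\kappa}_{\infty,\infty}}^{\theta'}$, as used in \cite{Bellazzini2014}. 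The strict inequalities in Assumption (A) leave the room needed for $c>0$. With this estimate, after translating $u_n$ we obtain a weak limit $u\ne0$ in $L^p\cap\dot H^{s_2}$.

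\textbf{Step 3: compactness via Brezis--Lieb splitting.} Write $u_n=u+r_n$ with $r_n\rightharpoonup0$.
\begin{itemize}
\item Since $D^\sigma u_n\to D^\sigma u$ a.e. after passing to a subsequence, by local compactness (Rellich on bounded sets, combined with the smoothing or locality of $D^\sigma$), Brezis--Lieb gives
$$
\|D^\sigma u_n\|_q^q=\|D^\sigma u\|_q^q+\|D^\sigma r_n\|_q^q+o(1).
$$
\item Similarly $\|u_n\|_p^p=\|u\|_p^p+\|r_n\|_p^p+o(1)$, and the Hilbert-space splitting gives $\|D^{s_2}u_n\|_2^2=\|D^{s_2}u\|_2^2+\|D^{s_2}r_n\|_2^2+o(1)$.
\end{itemize}
Set $A=\|u\|_p^p$ and $B=\|D^{s_2}u\|_2^2$. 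Applying the inequality \eqref{eq:GN-sharp} to $u$ and to $r_n$ yields
$$
C_*^q\le C_*^q\big[A^{\alpha}B^{\beta}+(1-A)^{\alpha}(1-B)^{\beta}\big],
\qquad \alpha=\frac{q(1-\theta)}{p},\ \beta=\frac{q\theta}{2}.
$$
If $\alpha+\beta>1$, the elementary inequality $A^\alpha B^\beta+(1-A)^\alpha(1-B)^\beta\le1$, with equality only when $A=B=1$, forces $u_n\to u$ strongly, so $u$ (equivalently $\phi=D^{-r}u$) is an optimizer. Checking $\alpha+\beta>1$ from \eqref{eq:scaling-relation} is the main obstacle. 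If it fails, I would instead use the dilation freedom to choose a normalization adapted to the exponents, in the style of \cite{Bellazzini2014}, and compare energies on the limit.

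The a.e. convergence of $D^\sigma u_n$ is also delicate when $\sigma>0$, because $D^\sigma$ is nonlocal. I expect to handle it through compactness of $D^\sigma$ in $H^{s_2-\sigma}_{loc}$, using the fact that $\sigma<s_2$.
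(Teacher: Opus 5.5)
Your outline is sound, and your Step 3 is essentially the paper's proof of Proposition~\ref{prop:GN-optimizer}. The difference is in how the argument is organized.

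The paper never runs concentration--compactness when $r<s_1/2$. After the substitution $\varphi=D^r\phi$, the quotient is literally the Bellazzini--Frank--Visciglia quotient with $s_1$ replaced by $s_1-2r$. Item (4) of Assumption (A) is exactly their hypothesis $(s_1-2r)/s_2<\theta<1$, so their theorem is simply quoted.

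When $s_1/2\le r\le s_1$, the paper uses a different substitution, $\varphi=D^{s_1-r}\phi$, which is your $D^{\sigma}u$. Every derivative then sits in the denominator and the numerator is a plain $\|\varphi\|_{L^q}$. The pqr lemma, the BFV translation lemma and Brezis--Lieb apply to $\varphi_n$ directly, and no Riesz potential ever appears in the numerator.

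Your single substitution $u=D^r\phi$ obliges you to redo BFV when $\sigma>0$, and to handle a nonlocal numerator when $\sigma\le 0$. Both are manageable:
\begin{itemize}
\item A.e.\ convergence of $D^\sigma u_n$ follows from Lemma~\ref{lem:local-compactness-Lp2} applied to $v_n=D^\sigma u_n$. This sequence is bounded in $L^q$, and $D^{s_2-\sigma}v_n=D^{s_2}u_n$ is bounded in $L^2$.
\item For $\sigma\le 0$, non-vanishing is easy. HLS turns $\|D^\sigma u_n\|_{L^q}\ge c$ into $\|u_n\|_{L^m}\ge c'$ with $p<m$, after which the pqr lemma plus the BFV lemma apply to $u_n$.
\end{itemize}
So the unified route works, at the price of more work; your Step 2 is still only a sketch. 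In exchange it gives a uniform treatment of both regimes. It also gives strong convergence of the normalized maximizing sequence, whereas the paper only uses $\alpha+\beta\ge 1$ to conclude that the weak limit is optimal.

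What you call the main obstacle is in fact immediate. Rearranging \eqref{eq:scaling-relation} gives
$$\alpha+\beta=\frac{q(1-\theta)}{p}+\frac{q\theta}{2}=1+\frac{q(\theta s_2-\sigma)}{d}.$$
This exceeds $1$ by item (4) of Assumption (A) when $\sigma>0$, and trivially when $\sigma\le 0$. So no fallback normalization is needed.

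A small correction to Step 1: HLS from $L^m$ to $L^q$ only needs $-\sigma<d/m$, i.e.\ $q<\infty$. The condition $2r-s_1<d/p$ is not used there. In fact it follows from the scaling relation together with $\theta>0$ and $s_2>d/2-d/p$.
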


The proof relies on two endpoint cases. The case $r=0$ is covered by the
optimizer theorem of Bellazzini, Frank and Visciglia \cite{Bellazzini2014}. The opposite endpoint
$r=s_1$ is proved as an auxiliary result, see Proposition \ref{prop:GN-optimizer}. The intermediate cases $0<r<s_1$ are
then obtained by applying suitable powers of $D$.

Here and in what follows, $\mathcal{X}=\mathcal{X}_{r,p,s_2}$ denotes the Banach realization of the
homogeneous space of all tempered distributions $\psi$ satisfying
$$
D^r\psi\in L^p(\mathbb{R}^d),
\qquad
D^{r+s_2}\psi\in L^2(\mathbb{R}^d).
$$
This space is needed to eliminate polynomial ambiguity and will be defined precisely in Section \ref{sec2}.

\begin{remark}
Mimicking the ideas of \cite{Am-zamp}, one can 
observe that if we define the action $S_\omega$ by
$$
S_\omega(\psi):=E(\psi)+\frac{\omega}{p}M(\psi),\qquad \omega>0,
$$
then a ground state $g$ of \eqref{eq:general-Euler-Lagrange} is a critical point of $S$ through $$
c_\omega=S(g)=\inf_{\psi\in\mathcal{N}_\omega}S_\omega(\psi)$$ 
and satisfies 
$$
\|D^{r+s_2}g\|_{L^2}^2 = \theta \|D^{s_1-r}g\|_{L^q}^q,\qquad\omega \|D^rg\|_{L^p}^p = (1-\theta) \|D^{s_1-r}g\|_{L^q}^q,
$$ 
where
$$
\mathcal{N}_\omega
:=
\left\{
\psi\in\mathcal{X}\setminus\{0\}:
\left\langle S_\omega'(\psi),\psi\right\rangle=0
\right\}
$$
is known as the Nehari manifold in the literature.
Moreover, 
$$
c_\omega=\inf_{\gamma\in\Gamma}\sup_{t\in[0,1]}S(\gamma(t)),
$$
where $$\Gamma:=\{\gamma\in C([0,1],\mathcal{X}),\;\gamma(0)=0,\;S(\gamma(1))<0\}.$$
So by isolating norms via the ground state energy $c_\omega$, we have
$$
c_\omega = \left( \frac{\theta}{2} + \frac{1-\theta}{p} - \frac{1}{q} \right)\|D^{s_1-r}g\|_{L^q}^q
$$
and
$$
C_* = \omega^{\frac{1-\theta}{p}} (1-\theta)^{-\frac{1-\theta}{p}} \theta^{-\frac{\theta}{2}} \left( \frac{\frac{\theta}{2} + \frac{1-\theta}{p} - \frac{1}{q}}{c_\omega} \right)^{\frac{\theta}{2} + \frac{1-\theta}{p} - \frac{1}{q}}.
$$ 
\end{remark} 

The remainder of this subsection is devoted to the variational consequences of Theorem \ref{thm:GN-general}, namely, the existence of normalized solutions for the associated Euler--Lagrange equation \eqref{eq:general-Euler-Lagrange}. We first record the range of $q$ determined by the scaling relation \eqref{eq:scaling-relation}. If one formally sets $\theta=0$ and $\theta=1$, then the corresponding endpoint values of $q$ are
$$
q_0(r):=\frac{dp}{d+p(s_1-2r)}
$$
and
\begin{equation}\label{eq:Sobolev-exponent}
q_S(r):=
\begin{cases}
\displaystyle
\frac{2d}{d-2(s_2+2r-s_1)},
& \text{if } 0<s_2+2r-s_1<\dfrac{d}{2},\\
+\infty,
& \text{if } s_2+2r-s_1\ge  \dfrac{d}{2}.
\end{cases}
\end{equation}
We shall refer to $q_S(r)$ as the Sobolev critical exponent. Since
$$
s_2>\max\left\{0,\frac{d}{2}-\frac{d}{p}\right\},
$$
one has $q_0(r)<q_S(r)$. Hence $0<\theta<1$ is equivalent to
$$
q_0(r)<q<q_S(r).
$$
We also define the mass-critical exponent by
$$
q_c(r):=\frac{2(d+ps_2)}{d+p(s_1-2r)}.
$$
Then $q=q_c(r)$ is equivalent to $q\theta=2$.

We now turn to the constrained variational problem associated with \eqref{eq:best-const-general}.  First, we introduce the energy functional $E:\mathcal{X}\to \mathbb{R}$ expressed by
$$
E(\psi):=\frac{1}{2}A(\psi)-\frac{1}{q}B(\psi),
$$
where $A,B:\mathcal{X}\to \mathbb{R}$ are given by
$$
A(\psi):=\|D^{s_2+r}\psi\|_{L^2}^2,
\qquad
B(\psi):=\|D^{s_1-r}\psi\|_{L^q}^q.
$$
In this case, the constraint takes the form
$$M(\psi):=\|D^r\psi\|_{L^p}^p.$$
For $\lambda>0$, let
$$
\mathcal{S}_\lambda
:=
\{\psi\in \mathcal{X}:M(\psi)=\lambda\}.
$$
The relevant scaling is the one that preserves the functional $M$. Namely, for  $\rho>0$, we define 
\begin{equation}\label{eq:scaling general}
(\rho\star\psi)(x)
:=
\rho^{\frac{d}{p}-r}\psi(\rho x).
\end{equation}
Then
$$
M(\rho\star\psi)=M(\psi)
$$
and
$$
E(\rho\star\psi)
=
\frac{1}{2}\rho^{2\eta}A(\psi)
-
\frac{1}{q}\rho^{q\theta\eta}B(\psi),
$$
where
$$
\eta:=s_2+\frac{d}{p}-\frac{d}{2}>0.
$$

Depending on the position of $q$ relative to the mass-critical exponent
$q_c(r)$, we obtain three different variational regimes. We state the corresponding results separately in the mass-subcritical, mass-critical, and mass-supercritical cases.

\begin{theorem}[Mass-subcritical case]\label{thm:subcritical}
Under Assumption (A) and
$$
q_0(r)<q<q_c(r),
$$
the infimum
$$
I_\lambda
=
\inf_{\psi\in\mathcal{S}_\lambda}E(\psi)
$$
is attained for every $\lambda>0$. Moreover,
\begin{equation}\label{eq:Ilambda-explicit-subcritical-general}
I_\lambda
=
-\frac{2-q\theta}{2q\theta}
\left(
\theta C_{r}^q
\lambda^{\frac{(1-\theta)q}{p}}
\right)^{
\frac{2}{2-q\theta}
}
<0.
\end{equation}
\end{theorem}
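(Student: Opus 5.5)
The plan is to reduce the constrained minimization to a one-dimensional optimization along the mass-preserving scaling \eqref{eq:scaling general}. The sharp inequality of Theorem \ref{thm:GN-general} supplies both the lower bound and the minimizer. Throughout, $C_r$ denotes the sharp constant $C_*$ of \eqref{eq:best-const-general} for the given $r$.

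\textbf{Lower bound.} Fix $\psi\in\mathcal{S}_\lambda$. By \eqref{eq:GN-sharp},
$$
B(\psi)\le C_r^q\,\lambda^{\frac{(1-\theta)q}{p}}\,A(\psi)^{\frac{q\theta}{2}}.
$$
Hence
$$
E(\psi)\ge f(A(\psi)),\qquad f(a):=\tfrac12 a-\tfrac1q K a^{\frac{q\theta}{2}},\qquad K:=C_r^q\lambda^{\frac{(1-\theta)q}{p}}.
$$
Since $q<q_c(r)$, we have $0<q\theta<2$. Therefore $f$ is bounded below on $(0,\infty)$. Its unique critical point satisfies $a_*^{1-\frac{q\theta}{2}}=\theta K$, and this gives the minimum
$$
\min_{a>0} f(a)=a_*\Bigl(\tfrac12-\tfrac{1}{q\theta}\Bigr)=-\frac{2-q\theta}{2q\theta}\,(\theta K)^{\frac{2}{2-q\theta}}.
$$
Thus $I_\lambda$ is bounded below by the right-hand side of \eqref{eq:Ilambda-explicit-subcritical-general}.

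\textbf{Attainment.} Let $Q$ be an optimizer from Theorem \ref{thm:GN-general}. Multiplying by a constant (the quotient is invariant under $\phi\mapsto c\phi$), we arrange $M(Q)=\lambda$, so that $B(Q)=K A(Q)^{q\theta/2}$. The function $\rho\star Q$ stays in $\mathcal{S}_\lambda$, and $A(\rho\star Q)=\rho^{2\eta}A(Q)$ with $\eta>0$. As $\rho$ ranges over $(0,\infty)$, $A(\rho\star Q)$ therefore covers all of $(0,\infty)$. The GN quotient is invariant under $\rho\star$, because the scaling relation \eqref{eq:scaling-relation} is exactly the dilation invariance; equivalently, $E(\rho\star\psi)$ has the form displayed before the theorem. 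So $\rho\star Q$ is still an optimizer and $E(\rho\star Q)=f(A(\rho\star Q))$. Choosing $\rho$ with $A(\rho\star Q)=a_*$ gives $E(\rho\star Q)=\min f$, so the infimum is attained and equals the stated value. This value is negative because $q\theta<2$ and $K>0$.

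\textbf{Checks.} The argument is essentially algebraic once the GN optimizer exists. The step needing care is verifying that the optimizer can be normalized into $\mathcal{S}_\lambda$ and that $\rho\star$ preserves the quotient and the space $\mathcal{X}$. That $\rho\star$ preserves $\mathcal{X}$ is immediate from the homogeneity of $D^\sigma$: $D^\sigma(\psi(\rho\cdot))=\rho^\sigma(D^\sigma\psi)(\rho\cdot)$, which yields the exponents $2\eta$ and $q\theta\eta$. I expect the only real obstacle to be the exponent bookkeeping confirming that the $B$-exponent equals $q\theta\eta$. This follows by substituting \eqref{eq:scaling-relation}: the $B$-exponent is $q(\frac dp-r+s_1-r-\frac dq)$, and the relation shows this equals $q\theta\eta$.
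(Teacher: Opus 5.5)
Your proposal is correct and follows the paper's proof essentially step for step. The sharp inequality gives $E(\psi)\ge f(A(\psi))$ on $\mathcal{S}_\lambda$, the one-variable function $f$ is minimized explicitly at $a_*=(\theta K)^{2/(2-q\theta)}$, and the minimum is attained by normalizing a Gagliardo--Nirenberg optimizer into $\mathcal{S}_\lambda$ and dilating it via $\rho\star$ (which preserves both $M$ and the quotient) until $A=a_*$. Your check that the $B$-exponent equals $q\theta\eta$ via \eqref{eq:scaling-relation} is left implicit in the paper, and it is correct.
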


\begin{theorem}[Mass-critical case]\label{thm:critical}
Let
$$
\lambda_*
:=
\left(
\frac{q}{2C_*^q}
\right)^{
\frac{p}{(1-\theta)q}
}=
\left(
\frac{q}{2C_*^q}
\right)^{
\frac{p}{q-2}
},
$$
where $C_*$ is defined by \eqref{eq:best-const-general}. Under Assumption (A) and
$$q=q_c(r),$$
we have
$$
I_\lambda=
\inf_{\psi\in\mathcal{S}_\lambda}E(\psi)
=
\begin{cases}
0, & \quad \text{if }\,  0<\lambda\le  \lambda_*,\\
-\infty, &\quad \text{if }\,  \lambda>\lambda_*.
\end{cases}
$$
Moreover, the infimum is attained if and only if
$$
\lambda=\lambda_*.
$$
\end{theorem}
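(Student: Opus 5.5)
The proof rests on the sharp inequality \eqref{eq:GN-sharp} (Theorem \ref{thm:GN-general}) together with the mass-preserving scaling \eqref{eq:scaling general}. When $q=q_c(r)$ we have $q\theta=2$, so $q-2=(1-\theta)q$; this explains the two expressions for $\lambda_*$. The plan has four steps: a lower bound for $E$ on $\mathcal{S}_\lambda$, an evaluation of the infimum, an analysis of when it is attained, and a check that $\lambda=\lambda_*$ is actually attained using the optimizer.

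\emph{Lower bound.} Take $\psi\in\mathcal{S}_\lambda$. Raise \eqref{eq:GN-sharp} to the power $q$ and use $q\theta=2$:
$$
B(\psi)\le C_*^q\,\lambda^{\frac{(1-\theta)q}{p}}A(\psi).
$$
Hence
$$
E(\psi)\ge \frac12 A(\psi)\Bigl(1-\frac{2}{q}C_*^q\lambda^{\frac{(1-\theta)q}{p}}\Bigr)=\frac12A(\psi)\Bigl(1-(\lambda/\lambda_*)^{\frac{(1-\theta)q}{p}}\Bigr).
$$
For $\lambda\le\lambda_*$ this gives $E\ge0$ on $\mathcal{S}_\lambda$. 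Moreover, if $\lambda<\lambda_*$, then $E(\psi)>0$ for every $\psi\ne0$, because $A(\psi)>0$ for nonzero elements of $\mathcal{X}$ (we work modulo polynomials).

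\emph{Value of the infimum.} By scaling, $E(\rho\star\psi)=\rho^{2\eta}E(\psi)$, since both $A$ and $B$ scale like $\rho^{2\eta}$ when $q\theta=2$.
\begin{itemize}
\item If $\lambda\le\lambda_*$: letting $\rho\to0$ along any fixed $\psi\in\mathcal{S}_\lambda$ gives $I_\lambda\le0$, hence $I_\lambda=0$.
\item If $\lambda>\lambda_*$: take a maximizer $Q$ of $C_*$ from Theorem \ref{thm:GN-general} and normalize it as $\psi=(\lambda/M(Q))^{1/p}Q\in\mathcal{S}_\lambda$. Equality in \eqref{eq:GN-sharp} then gives
$$
E(\psi)=\frac12A(\psi)\Bigl(1-(\lambda/\lambda_*)^{\frac{(1-\theta)q}{p}}\Bigr)<0.
$$
Letting $\rho\to\infty$ in $\rho\star\psi$ drives the energy to $-\infty$, so $I_\lambda=-\infty$.
\end{itemize}

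\emph{Attainment.} For $\lambda<\lambda_*$ the infimum $0$ cannot be attained, since $E>0$ on $\mathcal{S}_\lambda$. For $\lambda>\lambda_*$ the infimum $-\infty$ is trivially not attained. For $\lambda=\lambda_*$, the same normalization of the optimizer $Q$, now with $\lambda=\lambda_*$, gives equality in the lower bound, i.e.\ $E(\psi)=0=I_{\lambda_*}$. So the infimum is attained exactly when $\lambda=\lambda_*$.

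The only genuinely delicate input is the existence of an optimizer for $C_*$, which is supplied by Theorem \ref{thm:GN-general}. The remaining technical points are routine:
\begin{itemize}
\item the identity $(\lambda/\lambda_*)^{(1-\theta)q/p}=\frac2qC_*^q\lambda^{(1-\theta)q/p}$, which follows from the definition of $\lambda_*$;
\item the scaling exponents, which follow from $M(\rho\star\psi)=M(\psi)$ and $E(\rho\star\psi)=\frac12\rho^{2\eta}A(\psi)-\frac1q\rho^{q\theta\eta}B(\psi)$ as stated before Theorem \ref{thm:subcritical};
\item the fact that $A(\psi)>0$ for nonzero $\psi\in\mathcal{X}$, which is what excludes minimizers when $\lambda<\lambda_*$.
\end{itemize}
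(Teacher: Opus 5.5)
Your proposal is correct and follows the paper's proof step by step. It uses the same lower bound $E(\psi)\ge\bigl(\frac12-\frac{C_*^q}{q}\lambda^{(1-\theta)q/p}\bigr)A(\psi)$ from the sharp inequality with $q\theta=2$, the same scaling $E(\rho\star\psi)=\rho^{2\eta}E(\psi)$ with $\rho\to0$ or $\rho\to\infty$, the same non-attainment argument for $\lambda<\lambda_*$ ($A(\psi)=0$ forces a polynomial with $M(\psi)=0$), and the same normalized optimizer at $\lambda=\lambda_*$, differing only cosmetically in writing the coefficient as $\frac12\bigl(1-(\lambda/\lambda_*)^{(1-\theta)q/p}\bigr)$ and in obtaining $I_{\lambda_*}\le0$ by scaling rather than directly from the optimizer.
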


In the supercritical regime, direct minimization of $E$ on
$\mathcal{S}_\lambda$ is no longer possible, since the energy functional is unbounded
from below along the mass-preserving scaling \eqref{eq:scaling general}.
We therefore introduce the Pohozaev functional
$$
P(\psi):=A(\psi)-\theta B(\psi)
$$
and the Pohozaev manifold
$$
\mathcal{P}_\lambda
:=
\{\psi\in\mathcal{S}_\lambda:P(\psi)=0\}.
$$
As will be shown in Lemma \ref{lem:Pohozaev projection general}, each scaling fiber
$$
\{\rho\star\psi:\rho>0\}
$$
intersects $\mathcal{P}_\lambda$ at exactly one point, and this point is the
unique maximizer of $E$ along the fiber. Hence, it is natural to consider the
minimization problem
\begin{equation}\label{eq:min-on-Pohozaev}
m_\lambda
:=
\inf_{\psi\in\mathcal{P}_\lambda}E(\psi).
\end{equation}

The next theorem gives the variational replacement for direct minimization in the supercritical regime: the minimization problem on $\mathcal{P}_\lambda$ is attained, and its minimizers are constrained critical points of $E$ on $\mathcal{S}_\lambda$.

\begin{theorem}[Mass-supercritical case]\label{thm:supercritical}
Under Assumption (A) and
$$
q_c(r)<q<q_S(r),
$$
the minimization
problem \eqref{eq:min-on-Pohozaev} is attained for every $\lambda>0$.

Moreover, every minimizer $\psi_\lambda\in\mathcal{P}_\lambda$ is a
constrained critical point of $E$ on $\mathcal{S}_\lambda$. In addition,
$$
E(\psi_\lambda)\to+\infty
\qquad\text{as }\lambda\to0^+,
$$
and
$$
E(\psi_\lambda)\to0
\qquad\text{as }\lambda\to+\infty.
$$
\end{theorem}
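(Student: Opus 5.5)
The plan is to follow the standard Pohozaev-manifold strategy, using the sharp inequality \eqref{eq:GN-sharp} from Theorem \ref{thm:GN-general} throughout.

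\textbf{Step 1: fiber structure.} Along the mass-preserving scaling \eqref{eq:scaling general} set $f_\psi(\rho)=E(\rho\star\psi)=\tfrac12\rho^{2\eta}A(\psi)-\tfrac1q\rho^{q\theta\eta}B(\psi)$. Since $q\theta>2$ in the supercritical range, $f_\psi$ has a unique critical point $\rho_\psi$, and it is a global maximum. At this point $P(\rho_\psi\star\psi)=0$, which gives Lemma \ref{lem:Pohozaev projection general}. Moreover $\rho_\psi$ is explicit,
\[
\rho_\psi^{(q\theta-2)\eta}=\frac{A(\psi)}{\theta B(\psi)},
\]
and therefore
\[
m_\lambda=\inf_{\mathcal S_\lambda}\max_\rho E(\rho\star\psi)=\Bigl(\tfrac12-\tfrac1{q\theta}\Bigr)\inf_{\mathcal S_\lambda}\Bigl(\frac{A}{(\theta B)^{2/(q\theta)}}\Bigr)^{\frac{q\theta}{q\theta-2}}.
\]
On $\mathcal S_\lambda$, \eqref{eq:GN-sharp} gives $B\le C_*^q\lambda^{(1-\theta)q/p}A^{q\theta/2}$. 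Hence the quotient $A^{q\theta/2}/B$ is minimized exactly by multiples and rescalings of the optimizer $Q$ of Theorem \ref{thm:GN-general}. The upshot is that minimizing $E$ on $\mathcal P_\lambda$ is equivalent to maximizing the Gagliardo--Nirenberg quotient.

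\textbf{Step 2: existence of a minimizer.} Let $Q$ be the optimizer. First normalize it to $\mathcal S_\lambda$: multiply by $\mu$ with $\mu^p M(Q)=\lambda$. This is legitimate because the quotient in \eqref{eq:best-const-general} is $0$-homogeneous. Then project onto $\mathcal P_\lambda$ by $\rho_{\mu Q}\star(\mu Q)$; the quotient is also invariant under $\rho\star$. The resulting function attains the lower bound $m_\lambda=\bigl(\tfrac12-\tfrac1{q\theta}\bigr)\bigl(\theta C_*^q\lambda^{(1-\theta)q/p}\bigr)^{-2/(q\theta-2)}$, and this gives existence. This route avoids any compactness argument, since all the compactness is already contained in Theorem \ref{thm:GN-general}.

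\textbf{Step 3: limits in $\lambda$.} The explicit formula gives $m_\lambda=c\,\lambda^{-\frac{2(1-\theta)q}{p(q\theta-2)}}$ with a positive exponent. Since $\theta<1$ and $q\theta>2$, it follows that $m_\lambda\to+\infty$ as $\lambda\to0^+$ and $m_\lambda\to0$ as $\lambda\to\infty$.

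\textbf{Step 4: minimizers are constrained critical points (the main obstacle).} Every minimizer $\psi$ on $\mathcal P_\lambda$ is a maximizer of the GN quotient, so it satisfies an Euler--Lagrange equation of the form $\alpha A'(\psi)-\beta B'(\psi)+\gamma M'(\psi)=0$. The cleanest way to see this is to differentiate $\log$ of the quotient. The difficulty is that $M$, $A$, $B$ are $C^1$ on $\mathcal X$ only with care, because $p$ may be below $2$ and $\mathcal X$ is a homogeneous space modulo polynomials. I will check Fréchet differentiability using $\int|D^r\psi|^{p-2}D^r\psi\,D^rh$ and the analogous expressions for $A$ and $B$.

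Testing the equation with the scaling derivative $\frac{d}{d\rho}\big|_{\rho=1}\rho\star\psi$ kills the $M'$ term, since $M$ is scale invariant. Combined with $P(\psi)=0$, this forces the ratio of the coefficients to be exactly $\alpha/\beta=1/\ldots$ matching $E'=\tfrac12A'-\tfrac1qB'$. Concretely, the quotient's derivative gives
\[
\frac{q}{B}B'-\frac{q\theta}{2A}A'-\frac{(1-\theta)q}{p\lambda}M'=0.
\]
Multiplying by $B/q$ and using $A=\theta B$ on $\mathcal P_\lambda$ yields
\[
E'(\psi)=\tfrac12A'-\tfrac1qB'=-\frac{(1-\theta)B}{p\lambda q}M'(\psi).
\]
So $\psi$ is a constrained critical point with Lagrange multiplier $\omega=(1-\theta)B/(\lambda q)>0$, up to the normalization in \eqref{eq:general-Euler-Lagrange}. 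This step does not even need the scaling test, only that the minimizer maximizes the quotient. I expect the main care to be verifying that every $\mathcal P_\lambda$-minimizer, not just the constructed one, maximizes the quotient; this follows from the identity in Step 1.
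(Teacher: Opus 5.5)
Your Steps 1--3 match the paper. It proves the same fiber lemma, bounds $E=\frac{q\theta-2}{2q\theta}A$ from below on $\mathcal{P}_\lambda$ via \eqref{eq:GN-sharp}, and attains the bound by normalizing and projecting a Gagliardo--Nirenberg optimizer. The $\lambda$-limits follow from \eqref{eq:E-at-minimizer-general}; the paper leaves them implicit and you make them explicit.

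Step 4 takes a genuinely different route. The paper uses the natural-constraint argument. For a $C^1$ curve $\gamma$ in $\mathcal{S}_\lambda$ through $\psi_\lambda$, it projects $\rho(\gamma(s))\star\gamma(s)$ onto $\mathcal{P}_\lambda$ and differentiates $E$ along it at $s=0$. It then uses $P(\psi_\lambda)=0$ to kill the $\rho$-derivative, which gives $\langle E'(\psi_\lambda),\gamma'(0)\rangle=0$.

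You instead note that every $\mathcal{P}_\lambda$-minimizer is a sharp Gagliardo--Nirenberg optimizer. It is therefore an interior maximum of the $C^1$ functional $\log B-\frac{q\theta}{2}\log A-\frac{(1-\theta)q}{p}\log M$ on the open set $\{A,B,M>0\}\subset\mathcal{X}$. You then convert the resulting free Euler--Lagrange equation using $A=\theta B$.

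This gives you more than the paper's version. The multiplier and the equation in $\mathcal{X}'$ come out directly, which is Corollary \ref{cor:Euler-Lagrange-general}. You do not need the $C^1$ dependence of $\rho(\cdot)$, the manifold structure of $\mathcal{S}_\lambda$, or the step from ``vanishing on tangent vectors of curves'' to a Lagrange multiplier, which the paper does not spell out. The price is that your shortcut relies on $m_\lambda$ coinciding with the sharp Gagliardo--Nirenberg problem, a feature of the pure-power structure. The paper's argument would still work for nonlinearities where that coincidence fails.

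There is one arithmetic slip. The derivative of the log-quotient is $\frac{1}{B}B'-\frac{q\theta}{2A}A'-\frac{(1-\theta)q}{p\lambda}M'=0$, with $\frac{1}{B}B'$ rather than $\frac{q}{B}B'$. Multiplying by $B/q$ and using $A=\theta B$ gives $E'(\psi)=-\frac{(1-\theta)B}{p\lambda}M'(\psi)$. Hence $\omega=\frac{(1-\theta)B}{\lambda}=\frac{1-\theta}{\theta\lambda}A(\psi)$, which is exactly the multiplier of Corollary \ref{cor:Euler-Lagrange-general}, not $(1-\theta)B/(\lambda q)$.

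Also, in Step 1 the minimizers of $A^{q\theta/2}/B$ on $\mathcal{S}_\lambda$ are exactly the Gagliardo--Nirenberg optimizers, not ``multiples and rescalings of $Q$'', since uniqueness is not known. You only use the former, so nothing breaks.
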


Lastly, we collect the Euler--Lagrange consequences of the preceding existence
results in a single statement.

\begin{corollary}\label{cor:Euler-Lagrange-general}
Let $\psi_\lambda$ be a minimizer given by Theorem \ref{thm:subcritical},
or by Theorem \ref{thm:critical} in the case $\lambda=\lambda_*$, or by
Theorem \ref{thm:supercritical}. Then there exists a positive Lagrange multiplier
$$
\omega_\lambda
=
\frac{1-\theta}{\theta\lambda}A(\psi_\lambda)
$$
such that
\begin{equation}\label{eq:Euler-Lagrange-general}
D^{2(r+s_2)}\psi_\lambda
-
D^{s_1-r}
\left(
|D^{s_1-r}\psi_\lambda|^{q-2}
D^{s_1-r}\psi_\lambda
\right)
+
\omega_\lambda
D^r
\left(
|D^r\psi_\lambda|^{p-2}
D^r\psi_\lambda
\right)
=0
\end{equation}
in $\mathcal{X}'$.
\end{corollary}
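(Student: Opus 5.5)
We must prove Corollary \ref{cor:Euler-Lagrange-general}: every minimizer $\psi_\lambda$ produced by Theorems \ref{thm:subcritical}, \ref{thm:critical} (with $\lambda=\lambda_*$) or \ref{thm:supercritical} satisfies \eqref{eq:Euler-Lagrange-general} in $\mathcal{X}'$, with the stated positive multiplier. The plan has three parts: a Lagrange multiplier rule, a Pohozaev-type identity, and a computation of $\omega_\lambda$ from these two facts.

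\textbf{Step 1: Lagrange multiplier rule.} First I check that $A$, $B$, $M$ are $C^1$ on $\mathcal{X}$, with
$$
\langle A'(\psi),h\rangle=2\,\mathrm{Re}\int D^{r+s_2}\psi\,\overline{D^{r+s_2}h},
\qquad
\langle B'(\psi),h\rangle=q\,\mathrm{Re}\int |D^{s_1-r}\psi|^{q-2}D^{s_1-r}\psi\,\overline{D^{s_1-r}h},
$$
and similarly for $M'$ with exponent $p$ and $D^r$. This holds because $D^{s_1-r}$ maps $\mathcal{X}$ boundedly into $L^q$ by \eqref{eq:GN-sharp}, and $D^r$ maps $\mathcal{X}$ into $L^p$. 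Since
$$
\langle M'(\psi),\psi\rangle=pM(\psi)=p\lambda\neq0,
$$
the set $\mathcal{S}_\lambda$ is a $C^1$ submanifold of codimension one.

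In the subcritical and critical cases, $\psi_\lambda$ minimizes $E$ on $\mathcal{S}_\lambda$. The multiplier rule therefore gives $\mu\in\mathbb{R}$ with $E'(\psi_\lambda)+\mu M'(\psi_\lambda)=0$. Writing $\mu=\omega/p$, this is exactly \eqref{eq:Euler-Lagrange-general} tested against $h\in\mathcal{X}$.

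In the supercritical case, Theorem \ref{thm:supercritical} already asserts that $\psi_\lambda$ is a constrained critical point of $E$ on $\mathcal{S}_\lambda$. The same conclusion therefore follows.

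\textbf{Step 2: Pohozaev identity.} I next show $P(\psi_\lambda)=0$, i.e. $A(\psi_\lambda)=\theta B(\psi_\lambda)$.
\begin{itemize}
\item In the supercritical case this is the definition of $\mathcal{P}_\lambda$.
\item In the other cases, $\rho\star\psi_\lambda\in\mathcal{S}_\lambda$ for all $\rho>0$, so $f(\rho)=E(\rho\star\psi_\lambda)$ has a minimum at $\rho=1$. Using the scaling formula,
$$
0=f'(1)=\eta A(\psi_\lambda)-\theta\eta B(\psi_\lambda)=\eta P(\psi_\lambda),
$$
and $\eta>0$ gives $P(\psi_\lambda)=0$.
\end{itemize}
The supercritical case also fits this picture, since by Lemma \ref{lem:Pohozaev projection general} the fiber maximum sits at $\rho=1$.

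\textbf{Step 3: identify $\omega_\lambda$.} Test the Euler--Lagrange equation with $h=\psi_\lambda$. This gives
$$
A(\psi_\lambda)-B(\psi_\lambda)+\omega\lambda=0.
$$
Substituting $B=A/\theta$ from Step 2 yields
$$
\omega\lambda=\frac{1-\theta}{\theta}A(\psi_\lambda),
$$
which is the claimed formula. Positivity follows since $0<\theta<1$ and $A(\psi_\lambda)>0$. Indeed, $A(\psi_\lambda)=0$ would force $D^{r+s_2}\psi_\lambda=0$, so $\psi_\lambda=0$ modulo polynomials in $\mathcal{X}$, contradicting $M(\psi_\lambda)=\lambda>0$.

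\textbf{Main obstacle.} The only delicate point is the functional-analytic one in Step 1. We need differentiability of $B$ and $M$ on the homogeneous space $\mathcal{X}$, and we need the duality pairing to make sense modulo polynomials. I would handle this using the Banach realization from Section~\ref{sec2}, on which $D^r$ and $D^{r+s_2}$ act as isometric embeddings into $L^p$ and $L^2$. Differentiability of $u\mapsto\|u\|_{L^q}^q$ on $L^q$ for $q>1$ then transfers by composition with the bounded linear map $D^{s_1-r}$.
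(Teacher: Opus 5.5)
Your proposal is correct and follows the paper's route. It uses a Lagrange multiplier rule on $\mathcal{S}_\lambda$, the Pohozaev identity $A(\psi_\lambda)=\theta B(\psi_\lambda)$, and testing with $\psi_\lambda$ to get $\omega_\lambda\lambda=\frac{1-\theta}{\theta}A(\psi_\lambda)>0$, as in Section~\ref{sec6}. The one difference is that you obtain $P(\psi_\lambda)=0$ from minimality along the fiber $\rho\mapsto\rho\star\psi_\lambda$ (or from $\psi_\lambda\in\mathcal{P}_\lambda$), whereas the paper differentiates $\rho\mapsto E_\omega(\rho\star\psi)$ at a critical point of $E_\omega$; your version is more rigorous, since it never needs the dilation orbit to be differentiable in $\mathcal{X}$.
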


\section{Preliminaries}\label{sec2}

We denote by $\mathcal{S}(\mathbb{R}^d)$ the Schwartz space of rapidly decreasing smooth functions on $\mathbb{R}^d$, and by $\mathcal{S}'(\mathbb{R}^d)$ its dual, the space of tempered distributions. We also use
$\langle\cdot,\cdot\rangle$ to denote duality pairings, with the
underlying spaces being understood from the context.

Throughout the paper, $\mathcal{F}$ denotes the Fourier transform, normalized by
$$
\mathcal{F} \phi(\xi)
:=
(2\pi)^{-d/2}
\int_{\mathbb{R}^d}e^{-ix\cdot\xi}\phi(x) dx,
$$
with inverse
$$
\mathcal{F}^{-1}\phi(x)
=
(2\pi)^{-d/2}
\int_{\mathbb{R}^d}e^{ix\cdot\xi}\phi(\xi) d\xi.
$$
The definition is first understood for Schwartz functions and then
extended to tempered distributions by duality. For $s\in\mathbb{R}$, we define
$$
D^s \phi
:=
\mathcal{F}^{-1}\left(|\xi|^s\mathcal{F}\phi\right).
$$

We first fix the convention used for homogeneous spaces. Since homogeneous
Sobolev spaces are naturally defined modulo polynomials, we work in the quotient
$$
\mathcal{S}'(\mathbb{R}^d)/\mathcal{P}(\mathbb{R}^d),
$$
where $\mathcal{P}(\mathbb{R}^d)$ denotes the space of polynomials on
$\mathbb{R}^d$. If $T\in\mathcal{S}'(\mathbb{R}^d)$, we denote by $[T]$ its
equivalence class modulo $\mathcal{P}(\mathbb{R}^d)$.

We define the Lizorkin test-function space by
$$
\mathcal{S}_\infty(\mathbb{R}^d)
:=
\left\{
\varphi\in \mathcal{S}(\mathbb{R}^d):
\int_{\mathbb{R}^d}x^\alpha\varphi(x)dx=0
\text{ for every multi-index }\alpha
\right\}.
$$
see e.g. \cite{Samko2002, Sawano2017}.
Its continuous dual $\mathcal{S}_\infty'(\mathbb{R}^d)$ is identified with
\begin{equation}\label{eq:quotient}
\mathcal{S}'(\mathbb{R}^d)/\mathcal{P}(\mathbb{R}^d)
\end{equation}
through the restriction map
$$
T\in\mathcal{S}'(\mathbb{R}^d)
\mapsto
T|_{\mathcal{S}_\infty(\mathbb{R}^d)}\in
\mathcal{S}_\infty'(\mathbb{R}^d),
$$
whose kernel is $\mathcal{P}(\mathbb{R}^d)$. With the quotient topology on
$\mathcal{S}'(\mathbb{R}^d)/\mathcal{P}(\mathbb{R}^d)$ and the weak-$*$ topology
on $\mathcal{S}_\infty'(\mathbb{R}^d)$, this identification is a homeomorphism;
see e.g. Sawano \cite{Sawano2017}.
Under this identification, an equivalence class $[T]$ acts on
$\varphi\in\mathcal{S}_\infty(\mathbb{R}^d)$ by
$$
\langle [T],\varphi\rangle
:=
\langle T,\varphi\rangle.
$$
This is well-defined because polynomials vanish on
$\mathcal{S}_\infty(\mathbb{R}^d)$. Thus, homogeneous derivatives are understood
in the distributional sense modulo polynomials, equivalently by duality on
$\mathcal{S}_\infty(\mathbb{R}^d)$.
 
Given $1<\ell<\infty$, we first describe how $L^\ell(\mathbb{R}^d)$ is viewed inside the quotient \eqref{eq:quotient},
then define the homogeneous Sobolev spaces $\dot{L}_s^\ell(\mathbb{R}^d)$.

\begin{definition}
Let $1<\ell<\infty$. We define $\dot{L}^\ell(\mathbb{R}^d)$ as the set
of all equivalence classes
$$
[\phi]\in \mathcal{S}'(\mathbb{R}^d)/\mathcal{P}(\mathbb{R}^d)
$$
which admit an $L^\ell(\mathbb{R}^d)$-representative. Since the only polynomial
belonging to $L^\ell(\mathbb{R}^d)$ is zero, this representative is unique. Thus,
if $[\phi]=[\widetilde\phi]$ with $\widetilde\phi\in L^\ell(\mathbb{R}^d)$, we set
$$
\|[\phi]\|_{\dot{L}^\ell}:=\|\widetilde\phi\|_{L^\ell}.
$$
\end{definition}

\begin{definition}\cite[Definition 1.3.7]{Grafakos2014modern}

Let $s\in\mathbb{R}$ and $1<p<\infty$. The homogeneous Sobolev space
$\dot{L}_s^\ell(\mathbb{R}^d)$ is the space of all
$$
\phi\in \mathcal{S}'(\mathbb{R}^d)/\mathcal{P}(\mathbb{R}^d)
$$
such that the distribution
$$
\mathcal{F}^{-1}\left(|\xi|^s\mathcal{F} \phi\right)
$$
belongs to $\dot{L}^\ell(\mathbb{R}^d)$. For $\phi\in \dot{L}_s^\ell(\mathbb{R}^d)$, we set
$$
\|\phi\|_{\dot{L}_s^\ell}
:=
\left\|
\mathcal{F}^{-1}\left(|\xi|^s\mathcal{F} \phi\right)
\right\|_{\dot{L}^\ell}.
$$
\end{definition}

The variational problem considered below is naturally formulated in homogeneous
Sobolev spaces. Since these spaces are defined modulo polynomials, we recall the
notion of a realization, which allows us to work with a concrete Banach space of
tempered distributions.

\begin{definition}\cite{Bourdaud2011}\label{def:realization}
Let $X$ be a Banach space continuously embedded in
$\mathcal{S}'(\mathbb{R}^d)/\mathcal{P}$. A realization of $X$ is a subspace
$\mathfrak{X}\subset\mathcal{S}'(\mathbb{R}^d)$ for which there exists a bijective linear map
$$
L:X\to \mathfrak{X}
$$
such that
$$
[L\phi]=\phi
\qquad\text{for every }\phi\in X.
$$
Endowed $\mathfrak{X}$ with the norm
$$
\|L\phi\|_{\mathfrak{X}}:=\|\phi\|_{X},
$$
$\mathfrak{X}$ is a Banach space.
\end{definition}

For the variational problem, it is convenient to work with a concrete
realization of the homogeneous Sobolev space $\dot L_r^\ell(\mathbb{R}^d)$ with $r>0$ and $1<\ell<\infty$.
Let
$$
E^{r,\ell}(\mathbb{R}^d)
$$
denote the realization of the homogeneous Sobolev space
$\dot L_r^\ell(\mathbb{R}^d)$ introduced in
\cite[Definition~1.14]{Monguzzi2020}. By
\cite[Theorem~3]{Monguzzi2020}, every equivalence class in
$\dot L_r^\ell(\mathbb{R}^d)$ has a unique representative in
$E^{r,\ell}(\mathbb{R}^d)$, and the resulting correspondence is
isometric when $E^{r,\ell}(\mathbb{R}^d)$ is endowed with the norm
$$
\|\phi\|_{E^{r,\ell}}
:=
\|D^r\phi\|_{L^\ell}.
$$
When $r=0$, we use the convention 
$$
E^{0,\ell}(\mathbb{R}^d):=L^\ell(\mathbb{R}^d). 
$$

The precise realization depends on $r-d/\ell$. However, it is necessary to recall the definitions of the homogeneous Lipschitz and Sobolev--$\operatorname{BMO}$ spaces used below. 
\begin{definition}
For $h\in\mathbb{R}^d$, we define the finite-difference operators by 
$$ 
\Delta_h \phi(x):=\phi(x+h)-\phi(x), \qquad \Delta_h^{k+1}\phi:=\Delta_h\bigl(\Delta_h^k \phi\bigr).
$$ 
For $\gamma>0$, setting $m:=\lfloor\gamma\rfloor$, the homogeneous Lipschitz space is \
$$ 
\dot\Lambda^\gamma(\mathbb{R}^d) := \Big\{ [\phi]\in C(\mathbb{R}^d)/\mathcal{P}_m: \|\phi\|_{\dot\Lambda^\gamma} := \sup_{\substack{x\in\mathbb{R}^d\\ h\ne0}} \frac{|\Delta_h^{k+1}\phi(x)|}{|h|^\gamma} <\infty \Big\}. 
$$
where $\mathcal{P}_m$ denotes the space of all polynomials on
$\mathbb{R}^d$ of degree at most $m$.
\end{definition}

\begin{definition}
For $\phi\in L_{\mathrm{loc}}^1(\mathbb{R}^d)$ and a ball $B\subset\mathbb{R}^d$, let 
$$ 
\phi_B:=\frac{1}{|B|}\int_B \phi(x)dx. 
$$ 
The space $\operatorname{BMO}(\mathbb{R}^d)$ consists of all $\phi\in L_{\mathrm{loc}}^1(\mathbb{R}^d)$ such that 
$$ 
\|\phi\|_{\operatorname{BMO}} := \sup_{B\subset\mathbb{R}^d} \frac{1}{|B|} \int_B |\phi(x)-\phi_B|dx <\infty, 
$$ 
where the supremum is taken over all balls $B\subset\mathbb{R}^d$. 
\end{definition}
Since this seminorm is invariant under the addition of constants, $\operatorname{BMO}(\mathbb{R}^d)$ is naturally considered modulo constants.

\begin{definition}
For $m\in\mathbb N$, the Sobolev--$\operatorname{BMO}$ space is defined by 
$$ 
S_m(\operatorname{BMO})(\mathbb{R}^d) := \left\{ [\phi]\in\mathcal{S}'(\mathbb{R}^d)/\mathcal{P}_m: \partial^\alpha \phi\in\operatorname{BMO}(\mathbb{R}^d) \text{ for every }|\alpha|=m \right\}, 
$$ with norm 
$$ 
\|\phi\|_{S_m(\operatorname{BMO})} := \sum_{|\alpha|=m} \|\partial^\alpha \phi\|_{\operatorname{BMO}}. 
$$
\end{definition}

Now we are ready to classify Banach realizations of $\dot L_r^\ell(\mathbb{R}^d)$ depending on $r-d/\ell$. If
$0<r<d/\ell$, it is an
$L^{\frac{d\ell}{d-\ell r}}(\mathbb{R}^d)$-realization. If
$r=d/\ell$, it is a realization in
$\operatorname{BMO}(\mathbb{R}^d)$ satisfying
\begin{equation}\label{eq:mean-zero}
\phi_B=0,
\end{equation}
where $B\subset\mathbb{R}^d$ is a fixed ball. If
$$
r>\frac{d}{\ell},
\qquad
r-\frac{d}{\ell}\notin\mathbb N,
$$
it is a realization in the homogeneous Lipschitz space
$\dot\Lambda^{r-d/\ell}(\mathbb{R}^d)$
satisfying
\begin{equation}\label{eq:Taylor-zero}
P_{\phi;m;0}=0,
\qquad
m=\left\lfloor r-\frac{d}{\ell}\right\rfloor,
\end{equation}
where $P_{\phi;m;0}$ denotes the Taylor polynomial of $\phi$ of degree
$m$ at the origin. Finally, if
$$
r-\frac{d}{\ell}=m\in\mathbb N,
$$
it is a realization in 
$S_m(\operatorname{BMO})(\mathbb{R}^d)\cap C^{m-1}(\mathbb{R}^d)$
satisfying
\begin{equation}\label{eq:Taylor-mean-zero}
P_{\phi;m-1;0}=0,
\qquad
(\partial^\alpha\phi)_B=0
\quad\text{for every }|\alpha|=m.
\end{equation}
These conditions \eqref{eq:mean-zero}--\eqref{eq:Taylor-mean-zero} select a unique representative of each homogeneous
equivalence class.

We then define, uniformly for all $r>0$,
$$
\mathcal{X}_{r,\ell,s}
:=
\left\{
\phi\in E^{r,\ell}(\mathbb{R}^d):
D^{r+s}\phi\in L^2(\mathbb{R}^d)
\right\},
$$
and endow this space with the graph norm
$$
\|\phi\|_{\mathcal{X}_{r,\ell,s}}
:=
\|\phi\|_{E^{r,\ell}}
+
\|D^{r+s}\phi\|_{L^2}
=
\|D^r\phi\|_{L^\ell}
+
\|D^{r+s}\phi\|_{L^2}.
$$

\begin{proposition}
For every $r>0$, $1<\ell<\infty$, and $s>0$, the space
$\mathcal{X}_{r,\ell,s}$, endowed with the graph norm
$$
\|\phi\|_{\mathcal{X}_{r,\ell,s}}
=
\|D^r\phi\|_{L^\ell}
+
\|D^{r+s}\phi\|_{L^2},
$$
is a Banach space.
\end{proposition}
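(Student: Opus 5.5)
The plan is the standard completeness argument for a graph norm, using the fact, recalled above from \cite[Theorem~3]{Monguzzi2020}, that $E^{r,\ell}(\mathbb{R}^d)$ with norm $\|D^r\phi\|_{L^\ell}$ is a Banach space: it is isometric to $\dot L^\ell_r(\mathbb{R}^d)$, which is complete. First we check that the graph norm is a norm. Positivity and the triangle inequality are clear. If $\|\phi\|_{\mathcal{X}_{r,\ell,s}}=0$, then $\|\phi\|_{E^{r,\ell}}=0$, so $\phi=0$ in $E^{r,\ell}$. The realization conditions \eqref{eq:mean-zero}--\eqref{eq:Taylor-mean-zero} give uniqueness of the representative, so $\phi=0$ as a tempered distribution.

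For completeness, let $(\phi_n)$ be Cauchy in $\mathcal{X}_{r,\ell,s}$. Then $(\phi_n)$ is Cauchy in $E^{r,\ell}$, so $\phi_n\to\phi$ in $E^{r,\ell}$ for some $\phi\in E^{r,\ell}(\mathbb{R}^d)$; in particular $D^r\phi_n\to D^r\phi$ in $L^\ell$. Also $(D^{r+s}\phi_n)$ is Cauchy in $L^2$, so $D^{r+s}\phi_n\to g$ in $L^2$ for some $g\in L^2$. The remaining point is to identify $g=D^{r+s}\phi$, i.e.\ to show the operator $D^s$ acting on $D^r\phi$ is closed in the relevant sense.

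The main step is this identification, and I would carry it out by duality on the Lizorkin space $\mathcal{S}_\infty(\mathbb{R}^d)$. For $\varphi\in\mathcal{S}_\infty$, the function $D^s\varphi=\mathcal{F}^{-1}(|\xi|^s\mathcal{F}\varphi)$ again lies in $\mathcal{S}_\infty$, since $\mathcal{F}\varphi$ vanishes to infinite order at the origin, so multiplying by $|\xi|^s$ keeps it Schwartz and flat at $0$. Hence, in the pairing modulo polynomials,
\begin{equation*}
\langle D^{r+s}\phi_n,\varphi\rangle=\langle D^r\phi_n, D^s\varphi\rangle .
\end{equation*}
As $n\to\infty$, the left side tends to $\langle g,\varphi\rangle$ by $L^2$ convergence, since $\varphi\in L^2$. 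The right side tends to $\langle D^r\phi,D^s\varphi\rangle=\langle D^{r+s}\phi,\varphi\rangle$ by $L^\ell$ convergence, since $D^s\varphi\in L^{\ell'}$. Thus $[D^{r+s}\phi]=[g]$ in $\mathcal{S}'/\mathcal{P}$, so the class $D^{r+s}\phi$ has the $L^2$ representative $g$. This means $D^{r+s}\phi\in L^2$ in the sense of the definition of $\dot L^2$, with $\|D^{r+s}\phi\|_{L^2}=\|g\|_{L^2}$.

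Consequently $\phi\in\mathcal{X}_{r,\ell,s}$, and
\begin{equation*}
\|\phi_n-\phi\|_{\mathcal{X}_{r,\ell,s}}=\|D^r\phi_n-D^r\phi\|_{L^\ell}+\|D^{r+s}\phi_n-g\|_{L^2}\to0 .
\end{equation*}
The only delicate point is that derivatives are defined modulo polynomials. The identification above is made entirely at the level of classes, and the norm depends only on the unique $L^2$ representative, so no ambiguity remains.
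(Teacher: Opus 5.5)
Your proposal is correct and follows the paper's argument step for step. You take completeness of $E^{r,\ell}$ from Monguzzi's isometry with $\dot L^\ell_r$, then identify the $L^2$-limit $g$ with $D^{r+s}\phi$ by pairing against $\varphi\in\mathcal{S}_\infty$, using $D^s\mathcal{S}_\infty\subset\mathcal{S}_\infty$; the only addition is your explicit check that the graph norm is definite, which the paper leaves implicit.
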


\begin{proof}
Let $(\phi_n)$ be a Cauchy sequence in
$\mathcal{X}_{r,\ell,s}$. Then
$$
(D^r\phi_n)
\quad\text{and}\quad
(D^{r+s}\phi_n)
$$
are Cauchy sequences in $L^\ell(\mathbb{R}^d)$ and
$L^2(\mathbb{R}^d)$, respectively. Since these spaces are complete, there
exist
$$
f\in L^\ell(\mathbb{R}^d)
\qquad\text{and}\qquad
g\in L^2(\mathbb{R}^d)
$$
such that
$$
D^r\phi_n\rightarrow f
\quad\text{in }L^\ell(\mathbb{R}^d)
$$
and
$$
D^{r+s}\phi_n\rightarrow g
\quad\text{in }L^2(\mathbb{R}^d).
$$

By \cite[Theorem~3]{Monguzzi2020}, the realization
$E^{r,\ell}(\mathbb{R}^d)$ is isometrically isomorphic to
$\dot L_r^\ell(\mathbb{R}^d)$ when endowed with the norm
$$
\|\phi\|_{E^{r,\ell}}
:=
\|D^r\phi\|_{L^\ell}.
$$
In particular, $E^{r,\ell}(\mathbb{R}^d)$ is complete. Therefore, there exists
$\phi\in E^{r,\ell}(\mathbb{R}^d)$ such that
$$
D^r\phi_n\rightarrow D^r\phi
\quad\text{in }L^\ell(\mathbb{R}^d).
$$
By uniqueness of the limit in $L^\ell(\mathbb{R}^d)$, we have
$$
D^r\phi=f.
$$

It remains to show that
$$
D^{r+s}\phi=g.
$$
Let $\varphi\in\mathcal{S}_\infty(\mathbb{R}^d)$. Since
$D^s$ is an isomorphism of $\mathcal{S}_\infty(\mathbb{R}^d)$ onto itself,
we have
$$
D^s\varphi\in\mathcal{S}_\infty(\mathbb{R}^d)
\subset L^{\ell'}(\mathbb{R}^d),
\qquad
\frac{1}{\ell}+\frac{1}{\ell'}=1.
$$
Using the semigroup and self-adjointness properties of homogeneous derivatives, we obtain
$$
\begin{aligned}
\langle g,\varphi\rangle
&=
\lim_{n\to\infty}
\langle D^{r+s}\phi_n,\varphi\rangle\\
&=
\lim_{n\to\infty}
\langle D^r\phi_n,D^s\varphi\rangle\\
&=
\langle D^r\phi,D^s\varphi\rangle\\
&=
\langle D^{r+s}\phi,\varphi\rangle.
\end{aligned}
$$
Therefore,
$$
D^{r+s}\phi=g
\qquad
\text{in }\mathcal{S}_\infty'(\mathbb{R}^d).
$$
Since $g\in L^2(\mathbb{R}^d)$, it follows that
$$
D^{r+s}\phi\in L^2(\mathbb{R}^d),
$$
and hence $\phi\in\mathcal{X}_{r,\ell,s}$.

Finally,
$$
\begin{aligned}
\|\phi_n-\phi\|_{\mathcal{X}_{r,\ell,s}}
&=
\|D^r\phi_n-D^r\phi\|_{L^\ell}
+
\|D^{r+s}\phi_n-D^{r+s}\phi\|_{L^2}\\
&=
\|D^r\phi_n-f\|_{L^\ell}
+
\|D^{r+s}\phi_n-g\|_{L^2}
\rightarrow0.
\end{aligned}
$$
Therefore,
$\mathcal{X}_{r,\ell,s}$ is a Banach space.
\end{proof}

\section{Proof of Theorem \ref{thm:GN-general}}

We first prove a compactness lemma which, together with the Brezis--Lieb
lemma, yields the splitting of the nonlinear term in the energy functional $E$ in the proof of
Proposition \ref{prop:GN-optimizer}. Related results can be found in
\cite[Theorem 6.13]{Leoni2023fractional} and
\cite[Theorem 3.1]{bonder2019}.

\begin{lemma}\label{lem:local-compactness-Lp2}
Let $1<p_1,p_2<\infty$, $\sigma>0$, and let $(v_n)$ satisfy
$$
\sup_n\|v_n\|_{L^{p_1}}<\infty,
\qquad
\sup_n\|D^\sigma v_n\|_{L^{p_2}}<\infty.
$$
Then $(v_n)$ is relatively compact in
$L^{p_2}_{\mathrm{loc}}(\mathbb{R}^d)$. In particular, after passing to a
subsequence, there exists $v\in L^{p_2}_{\mathrm{loc}}(\mathbb{R}^d)$ such that
$$
v_n\to v
\qquad\text{strongly in }L^{p_2}_{\mathrm{loc}}(\mathbb{R}^d),
$$
and hence, after passing to a further subsequence,
$$
v_n(x)\to v(x)
\qquad\text{for a.e. }x\in\mathbb{R}^d.
$$
\end{lemma}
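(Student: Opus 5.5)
We will prove local compactness with a Fréchet--Kolmogorov type argument. The idea is to split each $v_n$ into a low-frequency part, which is smooth and locally equicontinuous, and a high-frequency part, which is small in $L^{p_2}$ uniformly in $n$.

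Fix a radial $\chi\in\mathcal{S}(\mathbb{R}^d)$ with $\mathcal{F}\chi=1$ near the origin, and set $\chi_\varepsilon(x)=\varepsilon^{-d}\chi(x/\varepsilon)$. Write
$$
v_n=\chi_\varepsilon* v_n+(v_n-\chi_\varepsilon*v_n).
$$

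\textbf{High-frequency part.} On the Fourier side this piece is
$$
(1-\mathcal{F}\chi(\varepsilon\xi))\,\widehat{v_n}=\varepsilon^{\sigma}\,m(\varepsilon\xi)\,|\xi|^\sigma\widehat{v_n},
\qquad
m(\zeta):=\frac{1-\mathcal{F}\chi(\zeta)}{|\zeta|^\sigma}.
$$
The symbol $m$ vanishes near $0$ and is a smooth symbol of order $-\sigma$ at infinity, so it satisfies the Mihlin--Hörmander condition. Hence it is an $L^{p_2}$ multiplier, with norm independent of $\varepsilon$ by dilation invariance. This gives
$$
\|v_n-\chi_\varepsilon*v_n\|_{L^{p_2}}\le C\varepsilon^\sigma\|D^\sigma v_n\|_{L^{p_2}}\le C'\varepsilon^\sigma
$$
uniformly in $n$. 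One point needs justification: $v_n\in L^{p_1}$ and $D^\sigma v_n\in L^{p_2}$, so the identity holds in $\mathcal{S}'$, and the high-frequency piece has no polynomial ambiguity.

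\textbf{Low-frequency part.} For fixed $\varepsilon$, Young's inequality gives
$$
\|\chi_\varepsilon*v_n\|_{L^\infty}+\|\nabla(\chi_\varepsilon*v_n)\|_{L^\infty}\le C_\varepsilon\|v_n\|_{L^{p_1}},
$$
using that $\chi_\varepsilon,\nabla\chi_\varepsilon\in L^{p_1'}$. On each ball $B_R$ the family $\{\chi_\varepsilon*v_n\}_n$ is therefore uniformly bounded and equicontinuous. By Arzelà--Ascoli it is relatively compact in $C(\overline{B_R})$, hence in $L^{p_2}(B_R)$.

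\textbf{Conclusion.} On $B_R$ we now have $\{v_n\}$ within distance $C'\varepsilon^\sigma$ in $L^{p_2}(B_R)$ of a relatively compact set, for every $\varepsilon>0$. So $\{v_n\}$ is totally bounded, hence relatively compact, in $L^{p_2}(B_R)$. We take $R=1,2,\dots$ and extract a diagonal subsequence converging in $L^{p_2}(B_R)$ for every $R$. The limits agree on overlaps and define $v\in L^{p_2}_{\mathrm{loc}}$. A further diagonal extraction gives a.e. convergence on each $B_R$, hence a.e. on $\mathbb{R}^d$.

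The main obstacle is the uniform multiplier estimate for the high-frequency part, since it is the only place where the $L^{p_2}$ control of $D^\sigma v_n$ enters. The key checks are that $m$ satisfies the Mihlin condition, including smoothness given that $|\zeta|^{-\sigma}$ is only singular where $1-\mathcal{F}\chi$ vanishes identically, and that the Mihlin bound is scale-invariant. As a fallback, one could instead write $v_n-\chi_\varepsilon*v_n=K_\varepsilon*D^\sigma v_n$ with an $L^1$ kernel satisfying $\|K_\varepsilon\|_{L^1}=\varepsilon^\sigma\|K_1\|_{L^1}$, and verify $K_1\in L^1$ directly from the decay of $m$ and its derivatives.
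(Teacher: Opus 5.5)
Your proposal is correct and follows essentially the same route as the paper: a low/high frequency splitting in which the high part obeys the uniform, scale-invariant Mihlin bound $\|(I-S_\varepsilon)v_n\|_{L^{p_2}}\le C\varepsilon^\sigma\|D^\sigma v_n\|_{L^{p_2}}$, the low part is handled by Young's inequality and Arzel\`a--Ascoli, and the conclusion comes from total boundedness plus a diagonal argument. Your mollifier $\chi_\varepsilon$ is simply the paper's Fourier cutoff $S_\varepsilon$ written as a convolution. You also make explicit a point the paper leaves implicit: since the multiplier vanishes near $\xi=0$, the polynomial ambiguity in $D^\sigma v_n$ disappears.
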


\begin{proof}
Let $K\Subset\mathbb{R}^d$. We prove that $(v_n)$ is relatively compact
in $L^{p_2}(K)$. Let $\chi\in C_c^\infty(\mathbb{R}^d;[0,1])$ be a smooth cutoff function
such that
$$
\chi(\xi)=
\begin{cases}
1, & |\xi|\le 1,\\
0, & |\xi|\ge 2.
\end{cases}
$$
For $\varepsilon>0$, we define
$$
S_\varepsilon f
:=
\mathcal{F}^{-1}
\big(
\chi(\varepsilon\xi)\mathcal{F} f(\xi)
\big).
$$
The idea is to split $v_n$ into low and high frequencies:
$$
v_n=S_{\varepsilon} v_n+(I-S_{\varepsilon}) v_n.
$$
The high-frequency part $(I-S_\varepsilon)v_n$ is uniformly small in
$L^{p_2}(K)$ as $\varepsilon\to0$, while for fixed $\varepsilon>0$, the
low-frequency part $S_\varepsilon v_n$ is smooth, uniformly bounded, and
equicontinuous on $K$. Combining these two facts gives total boundedness of
$(v_n)$ in $L^{p_2}(K)$.

We first estimate the high-frequency part. Let $T_{a_\varepsilon}$ be the
Fourier multiplier with symbol
$$
a_\varepsilon(\xi)
:=
(1-\chi(\varepsilon\xi))|\xi|^{-\sigma}.
$$
Then
$$
(I-S_\varepsilon)v_n
=
T_{a_\varepsilon}D^\sigma v_n.
$$
The function
$$
\eta\mapsto (1-\chi(\eta))|\eta|^{-\sigma}
$$
is smooth on $\mathbb{R}^d$, since $\chi=1$ near the origin, and it satisfies
the Mihlin estimates. Hence, its rescalings 
$$
\mu_\varepsilon(\xi)
:=
\varepsilon^{-\sigma}a_\varepsilon(\xi)
=
(1-\chi(\varepsilon\xi))|\varepsilon\xi|^{-\sigma}.
$$
satisfy the same
Mihlin estimates uniformly in $\varepsilon$. By the Mihlin--H\"ormander
multiplier theorem,
$$
\|T_{\mu_\varepsilon}f\|_{L^{p_2}}
\le 
C\|f\|_{L^{p_2}},
\qquad 1<p_2<\infty,
$$
where $C$ is independent of $\varepsilon$. Since
$$
T_{a_\varepsilon}
=
\varepsilon^\sigma T_{\mu_\varepsilon},
$$
we obtain
\begin{equation}\label{eq:Mihlin}
\|T_{a_\varepsilon}f\|_{L^{p_2}}
\le 
C\varepsilon^\sigma\|f\|_{L^{p_2}}.
\end{equation}
Applying \eqref{eq:Mihlin} with $f=D^\sigma v_n$, we get
$$
\|(I-S_\varepsilon)v_n\|_{L^{p_2}}
\le 
C\varepsilon^\sigma
\|D^\sigma v_n\|_{L^{p_2}}.
$$
Consequently,
$$
\sup_n
\|(I-S_\varepsilon)v_n\|_{L^{p_2}(K)}
\le 
C\varepsilon^\sigma
\to0
\qquad\text{as }\varepsilon\to0.
$$

It remains to prove compactness of the low-frequency part for fixed
$\varepsilon>0$. Since
$$
S_\varepsilon v_n=K_\varepsilon*v_n
$$
is  convolution with the Schwartz kernel  $K_\varepsilon=\mathcal{F}^{-1} \chi(\varepsilon\xi) \in \mathcal{S}(\mathbb{R}^d)$, we have
$$
\partial^\alpha S_\varepsilon v_n
=
(\partial^\alpha K_\varepsilon)*v_n
$$
for every multi-index $\alpha$. By Young's inequality,
$$
\|\partial^\alpha S_\varepsilon v_n\|_{L^\infty}
\le 
\|\partial^\alpha K_\varepsilon\|_{L^{p_1'}}
\|v_n\|_{L^{p_1}}
\le 
C_{\varepsilon,\alpha,p_1}.
$$
Thus, for every fixed $\varepsilon>0$, the sequence
$$
(S_\varepsilon v_n)
$$
is uniformly bounded and equicontinuous on $K$. By the Arzelà--Ascoli
theorem, $(S_\varepsilon v_n)$ is relatively compact in $C(K)$, hence also
in $L^{p_2}(K)$.

Now let $\delta>0$. We choose $\varepsilon>0$ sufficiently small such that
$$
\sup_n
\|(I-S_\varepsilon)v_n\|_{L^{p_2}(K)}
<
\delta.
$$
For this fixed $\varepsilon>0$, the sequence 
$$ 
(S_\varepsilon v_n) 
$$
is relatively compact in $L^{p_2}(K)$, and hence totally bounded.
Therefore, for every $\delta>0$, there exist $M\in\mathbb N$ and $w_1,\ldots,w_M\in L^r(K)$ such that 
$$
(S_\varepsilon v_n) \subset \bigcup_{j=1}^M B_{\delta}(w_j). 
$$ Equivalently, for every $n\in\mathbb N$, there exists $j\in\{1,\ldots,M\}$ such that 
$$
\|S_\varepsilon v_n-w_j\|_{L^r(K)}<\delta. 
$$
Then
$$
\|v_n-\varphi_j\|_{L^{p_2}(K)}
\le 
\|(I-S_\varepsilon)v_n\|_{L^{p_2}(K)}
+
\|S_\varepsilon v_n-\varphi_j\|_{L^{p_2}(K)}
<
2\delta.
$$
Thus, $(v_n)$ is totally bounded in $L^{p_2}(K)$. Since $L^{p_2}(K)$ is
complete, $(v_n)$ is relatively compact in $L^{p_2}(K)$.

Finally, applying the preceding argument to $K=\overline{B_R(0)}$,
$R\in\mathbb{N}$, and using a diagonal argument, we obtain relative compactness
in $L^{p_2}_{\mathrm{loc}}(\mathbb{R}^d)$. Hence, after passing to a
subsequence,
$$
v_n\to v
\qquad\text{strongly in }L^{p_2}_{\mathrm{loc}}(\mathbb{R}^d).
$$
Passing to a further subsequence gives
$$
v_n(x)\to v(x)
\qquad\text{for a.e. }x\in\mathbb{R}^d.
$$
\end{proof}

We next prove the endpoint case corresponding to $r=s_1$ in Theorem \ref{thm:GN-general}. The opposite endpoint $r=0$ is precisely the optimizer theorem of Bellazzini, Frank and Visciglia \cite{Bellazzini2014}. The intermediate cases will be reduced to these endpoint results by applying suitable powers of $D$.

\begin{proposition}\label{prop:GN-optimizer}
Let $1<p<\infty$, $0\le s_1<d/p$. Assume that 
$$
s_2>\max \left\{0, \frac{d}{2}-\frac{d}{p}\right\}.
$$
Also, let 
$0<\theta<1$ satisfy 
$$ 
\frac{1}{q} = (1-\theta)\left(\frac{1}{p}-\frac{s_1}{d}\right) + \theta\left(\frac{1}{2}-\frac{s_1+s_2}{d}\right).
$$ 
Then the sharp constant 
\begin{equation}\label{eq:C_1}
C_1:= \sup_{\phi\neq0} 
\frac{\|\phi\|_{L^q}}
{\|D^{s_1} \phi\|_{L^p}^{1-\theta}
\|D^{s_1+s_2} \phi\|_{L^2}^{\theta}} 
\end{equation}
is attained.  
\end{proposition}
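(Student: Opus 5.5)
We plan to follow the concentration-compactness strategy of Bellazzini--Frank--Visciglia, adapted to the variable $u=D^{s_1}\phi$. Set $W(\phi)$ for the quotient in \eqref{eq:C_1}. First, $C_1<\infty$ follows from the Sobolev embedding $\|\phi\|_{L^q}\lesssim\|D^{\sigma}\phi\|_{L^{\ell}}$ applied with an interpolation between $D^{s_1}\phi\in L^p$ and $D^{s_1+s_2}\phi\in L^2$. Concretely, we interpolate via complex interpolation or Littlewood--Paley: split $\phi$ into frequencies below and above a threshold $N$, bound each piece using Bernstein and Sobolev, and optimize in $N$. This uses $s_1<d/p$ and $s_2>d/2-d/p$.

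The functional $W$ is invariant under $\phi\mapsto a\phi(b\,\cdot+c)$. Given a maximizing sequence $(\phi_n)$, we normalize so that $\|D^{s_1}\phi_n\|_{L^p}=\|D^{s_1+s_2}\phi_n\|_{L^2}=1$ and $\|\phi_n\|_{L^q}\to C_1$.

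\textbf{Step 1 (nonvanishing).} We want translations $x_n$ such that $\phi_n(\cdot+x_n)$ does not vanish weakly. We would use a refined (improved) inequality
\[
\|\phi\|_{L^q}\lesssim \|D^{s_1}\phi\|_{L^p}^{a}\|D^{s_1+s_2}\phi\|_{L^2}^{b}\sup_{x}\|\phi\|_{L^{\tilde q}(B_1(x))}^{c},\qquad c>0,
\]
obtained as in \cite{Bellazzini2014} by interpolating $L^q$ between two nearby exponents $q_-<q<q_+$ allowed by the scaling condition together with a Lions-type lemma. An alternative is a Besov-type refinement $\|\phi\|_{L^q}\lesssim\|\phi\|_{\dot B^{-\kappa}_{\infty,\infty}}^{c}(\cdots)$, which gives nonvanishing after translations \emph{and} dilations. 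We expect to use the Besov version, since it handles scaling too. Its output is $(x_n,\lambda_n)$ with $\lambda_n^{\kappa}|(P_{1}\phi_n)(\lambda_n^{-1}\cdot)(x_n)|\ge\delta>0$ after renormalization.

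\textbf{Step 2 (weak limit).} After rescaling, $v_n:=\phi_n$ has $D^{s_1}v_n$ bounded in $L^p$ and $D^{s_1+s_2}v_n$ bounded in $L^2$. Sobolev gives $v_n$ bounded in $L^{p^*}$ with $1/p^*=1/p-s_1/d$. We apply Lemma \ref{lem:local-compactness-Lp2} to $v_n$ itself with $p_1=p^*$, an exponent $p_2$ near $q$, and $\sigma$ small, noting that $D^\sigma v_n$ is bounded in $L^{p_2}$ by the same interpolation. This yields a.e. and local convergence $v_n\to v$. Weak convergence $D^{s_1}v_n\rightharpoonup D^{s_1}v$ and $D^{s_1+s_2}v_n\rightharpoonup D^{s_1+s_2}v$ then follows, and Step 1 gives $v\ne0$.

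\textbf{Step 3 (splitting and strict superadditivity).} Set $r_n=v_n-v$. The Brezis--Lieb lemma gives $\|v_n\|_q^q=\|v\|_q^q+\|r_n\|_q^q+o(1)$. The Hilbert structure gives the exact $L^2$ splitting of $\|D^{s_1+s_2}\cdot\|_2^2$. For the $L^p$ term we use the Brezis--Lieb lemma on $D^{s_1}v_n$, which requires a.e. convergence of $D^{s_1}v_n$; this again follows from Lemma \ref{lem:local-compactness-Lp2} with $p_1=p$, $p_2$ suitable, and $\sigma$ small, controlled via interpolation with $D^{s_1+s_2}v_n\in L^2$. Where the exponents are mismatched (e.g.\ $p\neq2$), we use $\limsup$ inequalities instead. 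Writing $A=\|D^{s_1}\cdot\|_p^p$, $B=\|D^{s_1+s_2}\cdot\|_2^2$ and $\alpha=(1-\theta)q/p$, $\beta=\theta q/2$, we get
\[
C_1^q=\lim \|v_n\|_q^q\le C_1^q\bigl(A(v)^{\alpha}B(v)^{\beta}+\limsup A(r_n)^{\alpha}B(r_n)^{\beta}\bigr),
\]
with $A(v)+\limsup A(r_n)\le1$ and similarly for $B$. Since $\alpha+\beta=(1-\theta)q/p+\theta q/2>1$, the elementary inequality $a^\alpha b^\beta+(1-a)^\alpha(1-b)^\beta\le1$ holds, with equality only when $(a,b)\in\{(0,0),(1,1)\}$. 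Checking that $\alpha+\beta>1$ follows from the scaling relation and the hypotheses is the computation to verify. As $v\ne0$ we must have $A(v)=B(v)=1$, so $v$ is a maximizer.

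\textbf{Main obstacle.} We expect the main difficulty to be Step 1 together with the a.e. convergence of $D^{s_1}v_n$ needed for the $L^p$ Brezis--Lieb splitting when $p\ne2$. We would first try to handle both by working throughout with $u_n=D^{s_1}v_n$, a function bounded in $L^p$ with $D^{s_2}u_n$ bounded in $L^2$. This places us in the $r=0$ framework of \cite{Bellazzini2014}, using the nonlocal norm $\|D^{-s_1}u\|_{L^q}$ with $s_1<d/p$ as the numerator.
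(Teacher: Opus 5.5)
Your proposal is correct and follows essentially the same route as the paper: normalize both norms to $1$, extract a nonzero weak limit up to translations, use Lemma \ref{lem:local-compactness-Lp2} to get a.e. convergence of $\phi_n$ and of $D^{s_1}\phi_n$, split via Brezis--Lieb in $L^q$ and $L^p$ and via the Hilbert structure in $\dot H^{s_1+s_2}$, and conclude from $a^\alpha b^\beta+(1-a)^\alpha(1-b)^\beta\le 1$, where indeed $\alpha+\beta=1+q(s_1+\theta s_2)/d>1$. The only real difference is the nonvanishing step: the paper bounds $\phi_n$ in $L^{p_1}$ and $L^{p_2}$ with $p_1<q<p_2$, gets $|\{|\phi_n|>\eta_0\}|\ge c_0$ from the $pqr$ lemma, and then applies the translation-compactness lemma of Bellazzini--Frank--Visciglia (essentially your first option). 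This uses translations only, so it avoids having to show that the dilation parameters produced by your preferred Besov refinement stay bounded above and below; they do, by Bernstein's inequality together with the two fixed normalizations, but this must be checked.
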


\begin{proof}
Let $(\phi_n)$ be a maximizing sequence for \eqref{eq:C_1}. Since the quotient is
homogeneous under multiplication by nonzero constants, we may first assume that
$$
\|D^{s_1}\phi_n\|_{L^p}=1.
$$
Next, using the scaling
$$
\phi_\rho(x):=\rho^{\frac{d}{p}-s_1}\phi(\rho x),
$$
we have
$$
\|D^{s_1}(\phi_\rho)\|_{L^p}
=
\|D^{s_1}\phi\|_{L^p}
$$
and
$$
\|D^{s_1+s_2}(\phi_\rho)\|_{L^2}
=
\rho^{\eta}
\|D^{s_1+s_2}\phi\|_{L^2}.
$$
So, we can choose 
$$
\rho:=\left\|D^{s_1+s_2} \phi\right\|_{L^2}^{-1 / \eta}.
$$ 
so that
$$
\|D^{s_1+s_2}(\phi_\rho)\|_{L^2}=1.
$$
Therefore, after these normalizations, we may assume that
$$
\|D^{s_1}\phi_n\|_{L^p}=1,
\qquad
\|D^{s_1+s_2}\phi_n\|_{L^2}=1.
$$

We can choose $p_1,p_2$ such that
$$
q_0(s_1)<p_1<q<p_2<q_S(s_1).
$$
For $j=1,2$, let $\theta_j\in(0,1)$ be determined by
$$
\frac{1}{p_j}
=
(1-\theta_j)\left(\frac{1}{p}-\frac{s_1}{d}\right)
+
\theta_j\left(\frac{1}{2}-\frac{s_1+s_2}{d}\right).
$$
Using the Gagliardo--Nirenberg type inequalities
$$
\|\phi_n\|_{L^{p_j}}
\le
C_{p_j}
\|D^{s_1}\phi_n\|_{L^p}^{1-\theta_j}
\|D^{s_1+s_2}\phi_n\|_{L^2}^{\theta_j},
\qquad j=1,2,
$$
we obtain
$$
\sup_n
\left(
\|\phi_n\|_{L^{p_1}}
+
\|\phi_n\|_{L^{p_2}}
\right)<\infty.
$$

Since
$$
\|\phi_n\|_{L^q}\to C_1,
$$
the $pqr$ lemma \cite{frohlich1986} gives constants $\eta_0,c_0>0$ such that
$$
\left|
\{x\in\mathbb{R}^d: |\phi_n(x)|>\eta_0\}
\right|
\ge c_0.
$$
By the compactness-up-to-translations lemma \cite{Bellazzini2014}, there exist translations
$x_n\in\mathbb{R}^d$ and a nonzero function $\phi\not\equiv0$ such that
\begin{equation}\label{eq:weak-conv}
\widetilde{\phi}_n(x):=\phi_n(x+x_n)
\rightharpoonup \phi\quad \text{weakly in }\dot{H}^{s_1+s_2}(\mathbb{R}^d)
\cap
L^q(\mathbb{R}^d),
\end{equation}
up to a subsequence. By Lemma \ref{lem:local-compactness-Lp2}, after passing to a subsequence, 
$$
\widetilde{\phi}_n\to \phi
\qquad\text{strongly in }L^2_{\mathrm{loc}}(\mathbb{R}^d).
$$ 
Hence, after passing to a further subsequence, 
$$
\widetilde{\phi}_n(x)\to \phi(x)
\qquad\text{for a.e. }x\in\mathbb{R}^d.
$$
By the Brézis-Lieb lemma \cite{Brezis1983},
$$
\|\widetilde{\phi}_n\|_{L^q}^q
=
\|\phi\|_{L^q}^q+\|w_n\|_{L^q}^q+o(1),
$$
where
$$
w_n:=\widetilde{\phi}_n-\phi.
$$
Also, using Lemma \ref{lem:local-compactness-Lp2} for 
$v_n=D^{s_1}\widetilde{\phi}_n$, we get
$$
D^{s_1} \widetilde{\phi}_n(x)\to v
\qquad\text{for a.e. }x \in \mathbb{R}^d,
$$
for some $v \in L_{\mathrm{loc}}^2(\mathbb{R}^d)$. From \eqref{eq:weak-conv}, we see that 
$$
v=D^{s_1} \phi.
$$
Hence, the Brézis-Lieb lemma gives
$$
1=\|D^{s_1} \widetilde{\phi}_n\|_{L^p}^p
=
\|D^{s_1} \phi\|_{L^p}^p
+
\|D^{s_1} w_n\|_{L^p}^p
+
o(1).
$$
Moreover, since
$$
D^{s_1+s_2}w_n\rightharpoonup0
\qquad\text{weakly in }L^2(\mathbb R^d),
$$
we have
$$
\begin{aligned}
1=\|D^{s_1+s_2} \tilde{\phi}_n\|_{L^2}^2= & \left\|D^{s_1+s_2} \phi\right\|_{L^2}^2+\left\|D^{s_1+s_2} w_n\right\|_{L^2}^2 \\
& +2 \left\langle D^{s_1+s_2} \phi, D^{s_1+s_2} w_n\right\rangle_{L^2} \\
= & \left\|D^{s_1+s_2} \phi\right\|_{L^2}^2+\left\|D^{s_1+s_2} w_n\right\|_{L^2}^2+o(1).
\end{aligned}
$$
By the definition of \eqref{eq:C_1},
$$
\|\phi\|_{L^q}^q
\le
C_1^q
\|D^{s_1} \phi\|_{L^p}^{(1-\theta)q}
\|D^{s_1+s_2} \phi\|_{L^2}^{\theta q}
$$
and 
$$
\|w_n\|_{L^q}^q
\le
C_1^q
\|D^{s_1} w_n\|_{L^p}^{(1-\theta)q}
\|D^{s_1+s_2} w_n\|_{L^2}^{\theta q}.
$$
Combining these estimates with the Brézis-Lieb lemma, we get
$$
C_1^q
\le
C_1^q
\big(
b^{\alpha}a^{\beta}
+
(1-b)^{\alpha}(1-a)^{\beta}
\big),
$$
where
$$
b:=\|D^{s_1} \phi\|_{L^p}^p,
\qquad
a:=\|D^{s_1+s_2} \phi\|_{L^2}^2,
$$
and
$$
\alpha:=\frac{(1-\theta)q}{p},
\qquad
\beta:=\frac{\theta q}{2}.
$$
Since
$$
\alpha+\beta
\ge1,
$$
we have
$$
b^{\alpha}a^{\beta}
+
(1-b)^{\alpha}(1-a)^{\beta}
\le1.
$$
Hence, equality must hold throughout. In particular,
$$
\|\phi\|_{L^q}^q
=
C_1^q
\|D^{s_1} \phi\|_{L^p}^{(1-\theta)q}
\|D^{s_1+s_2} \phi\|_{L^2}^{\theta q}.
$$
Since $\phi\not\equiv0$, this means that $\phi$ attains the sharp constant $C_1$.
\end{proof}

Having established the necessary preliminary results, we are now ready to prove
Theorem \ref{thm:GN-general}.

\begin{proof}[Proof of Theorem \ref{thm:GN-general}]
We split the proof into two cases. Let $0\le  r<s_1/2$.
Setting
$$
\varphi:=D^r \phi,
$$
we have
\begin{equation}\label{eq:quotient-case1}
\frac{
\|D^{s_1-r}\phi\|_{L^q}
}{
\|D^r \phi\|_{L^p}^{1-\theta}
\|D^{r+s_2}\phi\|_{L^2}^{\theta}
}
=
\frac{
\|D^{s_1-2r}\varphi\|_{L^q}
}{
\|\varphi\|_{L^p}^{1-\theta}
\|D^{s_2}\varphi\|_{L^2}^{\theta}
}.
\end{equation}
The quotient \eqref{eq:quotient-case1} admits a maximizer $\varphi_*$ by \cite{Bellazzini2014}. Then taking
$$
\phi_*:=D^{-r}\varphi_*
$$
gives a maximizer for \eqref{eq:best-const-general}.

Now let $s_1/2\le  r\le  s_1$.
Setting
$$
\varphi:=D^{s_1-r}\phi
$$
and
$$
\sigma:=2r-s_1\ge  0,
$$
we have
\begin{equation}\label{eq:quotient-case2}
\frac{
\|D^{s_1-r}\phi\|_{L^q}
}{
\|D^r \phi\|_{L^p}^{1-\theta}
\|D^{r+s_2}\phi\|_{L^2}^{\theta}
}
=
\frac{
\|\varphi\|_{L^q}
}{
\|D^\sigma\varphi\|_{L^p}^{1-\theta}
\|D^{\sigma+s_2}\varphi\|_{L^2}^{\theta}
}
\end{equation}
The quotient \eqref{eq:quotient-case2} admits a maximizer $\varphi_*$ by Proposition \ref{prop:GN-optimizer}.
Then
$$
\phi_*:=D^{-(s_1-r)}\varphi_*
$$
is a maximizer for \eqref{eq:best-const-general}.
\end{proof}

\section{Proof of Theorem \ref{thm:subcritical}}

\begin{proof}[Proof of Theorem \ref{thm:subcritical}]
Let
$$
\alpha:=\frac{\theta q}{2},
\qquad
\beta:=\frac{(1-\theta)q}{p}.
$$
For every
$\psi\in\mathcal{S}_\lambda$ ,
$$
B(\psi)
\le 
C_*^q
\lambda^\beta
A(\psi)^\alpha,
$$
by the sharp Gagliardo--Nirenberg inequality \eqref{eq:GN-sharp}.
Then
$$
E(\psi)
\ge
\frac{1}{2} A(\psi)
-
\frac{C_*^q}{q}\lambda^\beta A(\psi)^\alpha.
$$
We see that the lower bound for $E(\psi)$ depends only on 
$A(\psi)$, so we are led to consider the auxiliary  function
$$
f_\lambda(t)
:=
\frac{1}{2}t
-
\frac{C_*^q}{q}\lambda^\beta t^\alpha,
\qquad t\ge 0.
$$
Therefore,
$$
E(\psi)\ge  f_\lambda(A(\psi))
\qquad
\text{for every } \psi\in\mathcal{S}_\lambda.
$$
Now we minimize $f_\lambda$ on $[0,\infty)$.  The limits
$$
\lim_{t\to0^+}f_\lambda'(t)=-\infty,
\qquad
\lim_{t\to+\infty}f_\lambda'(t)=\frac{1}{2}
$$
imply that $f_\lambda'$ has at least one zero in $(0,\infty)$.
Since 
$f_\lambda'$ is strictly increasing,  this zero is unique.
Thus the unique minimizer of $f_\lambda$ is
$$
t_\lambda
:=
\left(
\theta C_*^q
\lambda^\beta
\right)^{\frac{1}{1-\alpha}}.
$$
Since
\begin{equation}\label{eq:for-B}
C_*^q\lambda^\beta t_\lambda^\alpha
=
\frac{1}{\theta}t_\lambda,
\end{equation}
it follows that
$$
\begin{aligned}
E(\psi)
\ge  f_\lambda(A(\psi))\ge f_\lambda(t_\lambda)=
-\frac{2-q\theta}{2q\theta}t_\lambda
\end{aligned}
$$
for every $\psi\in\mathcal{S}_\lambda$. Taking the infimum over $\psi\in\mathcal{S}_\lambda$, we obtain
\begin{equation}\label{eq:lower-bound-subcritical-sharp}
I_\lambda
\ge 
-\frac{2-q\theta}{2q\theta}
\left(
\theta C_*^q
\lambda^{\frac{(1-\theta)q}{p}}
\right)^{\frac{2}{2-q\theta}}.
\end{equation}

It remains to show that equality is attained in \eqref{eq:lower-bound-subcritical-sharp}. Let $\Phi\neq0$ be an optimizer
for \eqref{eq:GN-sharp}. We define
$$
\psi
:=
\left(
\frac{\lambda}{M(\Phi)}
\right)^{\frac{1}{p}}\Phi.
$$
Then $\psi\in\mathcal{S}_\lambda$, and $\psi$ is still an optimizer for \eqref{eq:GN-sharp}. Hence,
$$
B(\psi)
=
C_*^q
\lambda^\beta A(\psi)^\alpha.
$$
Setting
$$
\psi_\lambda:=\rho_\lambda\star\psi,\qquad \rho_\lambda
:=
\left(
\frac{t_\lambda}{A(\psi)}
\right)^{\frac{1}{2\eta}},
$$
we have
\begin{equation}\label{eq:A-t-lambda}
\psi_\lambda\in\mathcal{S}_\lambda, \qquad A(\psi_\lambda)
=
\rho_\lambda^{2\eta}A(\psi)
=
t_\lambda.
\end{equation}
Since the sharp Gagliardo--Nirenberg quotient \eqref{eq:best-const-general} is invariant under the scaling
$\psi\mapsto\rho\star\psi$, the function $\psi_\lambda$ is still an
optimizer for \eqref{eq:GN-sharp}. Therefore, using \eqref{eq:for-B} and \eqref{eq:A-t-lambda}, we obtain
\begin{equation}\label{eq:B-t-lambda}
B(\psi_\lambda)
=
C_*^q
\lambda^\beta A(\psi_\lambda)^\alpha
=
\frac{1}{\theta}t_\lambda.
\end{equation}
Combining \eqref{eq:A-t-lambda}--\eqref{eq:B-t-lambda}, we have
$$
\begin{aligned}
E(\psi_\lambda)=
-\frac{2-q\theta}{2q\theta}t_\lambda.
\end{aligned}
$$
We see that the equality holds in \eqref{eq:lower-bound-subcritical-sharp}, and so
$$
E(\psi_\lambda)=I_\lambda.
$$
Therefore, $I_\lambda$ is attained, and \eqref{eq:Ilambda-explicit-subcritical-general}
follows.
\end{proof}

\section{Proof of Theorem \ref{thm:critical}}
By the sharp Gagliardo--Nirenberg inequality \eqref{eq:GN-sharp},
$$
B(\psi)
\le 
C_*^q
\lambda^{\frac{(1-\theta)q}{p}}
A(\psi).
$$
Consequently,
\begin{equation}\label{eq:E-bounded-by-A-general}
E(\psi)
\ge 
\left(
\frac{1}{2}
-
\frac{C_*^q}{q}
\lambda^{\frac{(1-\theta)q}{p}}
\right)A(\psi)
\qquad
\text{for all }\psi\in\mathcal{S}_\lambda.
\end{equation}

\begin{proof}[Proof of Theorem \ref{thm:critical}]

Let $0<\lambda<\lambda_*$. Then
$$
\frac{1}{2}
-
\frac{C_*^q}{q}
\lambda^{\frac{(1-\theta)q}{p}}
>0.
$$
Therefore, by \eqref{eq:E-bounded-by-A-general}, there exists $c_\lambda>0$
such that
\begin{equation}\label{eq:coerc-crit-general}
E(\psi)\ge  c_\lambda A(\psi)
\qquad
\text{for all } \psi\in\mathcal{S}_\lambda.
\end{equation}
In particular,
$$
I_\lambda\ge 0.
$$
On the other hand, for any $\psi\in\mathcal{S}_\lambda$, 
$$
I_\lambda\le  E(\rho\star\psi)
=
\rho^{2\eta}E(\psi)\to0
\qquad\text{as }\rho\to0^+.
$$
Thus,
$$
I_\lambda=0.
$$
We now show that $I_\lambda$ is not attained when $0<\lambda<\lambda_*$.
Suppose, by contradiction, that $I_\lambda$ is attained at some
$\psi_\lambda\in\mathcal{S}_\lambda$. Then
$$
E(\psi_\lambda)=I_\lambda=0.
$$
But \eqref{eq:coerc-crit-general} implies
$$
A(\psi_\lambda)=0.
$$
Hence,
$$
D^{r+s_2}\psi_\lambda=0.
$$
Since $r+s_2>0$, the Fourier transform of $\psi_\lambda$ is supported at
the origin. Thus $\psi_\lambda$ is a polynomial. The condition
$D^r\psi_\lambda\in L^p(\mathbb{R}^d)$ then forces
$$
M(\psi_\lambda)=0,
$$
which contradicts $M(\psi_\lambda)=\lambda>0$. Therefore $I_\lambda$ is not
attained for $0<\lambda<\lambda_*$.

 Next, let $\lambda=\lambda_*$. By \eqref{eq:E-bounded-by-A-general},
$$
I_{\lambda_*}\ge 0.
$$
Let $\Phi\neq0$ be an optimizer for the sharp Gagliardo--Nirenberg inequality \eqref{eq:GN-sharp}.
Setting
$$
\psi_*
:=
\left(
\frac{\lambda_*}{M(\Phi)}
\right)^{\frac{1}{p}}\Phi,
$$
we see that $\psi_*\in\mathcal{S}_{\lambda_*}$, and $\psi_*$ is still an optimizer
for the sharp Gagliardo--Nirenberg inequality \eqref{eq:GN-sharp}. Hence,
$$
B(\psi_*)=
C_*^q
\lambda_*^{\frac{(1-\theta)q}{p}}
A(\psi_*).
$$
Since
$$
C_*^q
\lambda_*^{\frac{(1-\theta)q}{p}}
=
\frac{q}{2},
$$
it follows that
$$
I_{\lambda_*}=E(\psi_*)
=
0.
$$

Finally, let $\lambda>\lambda_*$. Again we normalize the function $\Phi$, 
$$
\psi
:=
\left(
\frac{\lambda}{M(\Phi)}
\right)^{\frac{1}{p}}\Phi.
$$
Then $\psi\in\mathcal{S}_\lambda$ and
$$
B(\psi)
=
C_*^q
\lambda^{\frac{(1-\theta)q}{p}}
A(\psi).
$$
Since 
$$
C_*^q
\lambda^{\frac{(1-\theta)q}{p}}
>
\frac{q}{2},
$$
it follows that
$$
E(\psi)
=
\left(
\frac{1}{2}
-
\frac{C_*^q}{q}
\lambda^{\frac{(1-\theta)q}{p}}
\right)A(\psi)
<0.
$$
Moreover,
$$
E(\rho\star\psi)=
\rho^{2\eta}E(\psi)\to-\infty
\qquad\text{as }\rho\to+\infty.
$$
Hence,
$$
I_\lambda=-\infty.
$$
\end{proof}

\section{Proof of Theorem \ref{thm:supercritical}}\label{sec6}

In the supercritical case, for every $\psi\in\mathcal{S}_\lambda$,
$$
E(\rho\star\psi)\to -\infty
\qquad \text{as } \rho\to\infty.
$$
Consequently,
\begin{equation}\label{eq:supercr. general}
I_\lambda:=\inf\{E(\psi):\psi\in\mathcal{S}_\lambda\}=-\infty.
\end{equation}
We therefore use the scaling \eqref{eq:scaling general}
to identify a natural constraint, namely, the Pohozaev identity.

For $\omega\in\mathbb{R}$, set
$$
E_\omega(\psi)
:=
E(\psi)+\frac{\omega}{p}M(\psi).
$$
A weak solution of
\begin{equation}\label{eq:weak-form}
D^{2(r+s_2)}\psi
-
D^{s_1-r}\left(
|D^{s_1-r}\psi|^{q-2}D^{s_1-r}\psi
\right)
+
\omega D^r\left(
|D^r\psi|^{p-2}D^r\psi
\right)
=0
\end{equation}
is precisely a critical point of $E_\omega$ in $\mathcal{X}$.

Let $\psi$ be a critical point of $E_\omega$. Since the scaling \eqref{eq:scaling general} preserves $M$,
the scalar fiber map
$$
\rho\mapsto E_\omega(\rho\star\psi)
$$
has a critical point at $\rho=1$. Differentiating at $\rho=1$, we obtain
\begin{equation}\label{eq:Pohozaev-identity}
\begin{aligned}
0
&=
\left.\frac{d}{d\rho}E_\omega(\rho\star\psi)\right|_{\rho=1}  \\
&=
\left.
\frac{d}{d\rho}
\left(
\frac{1}{2}\rho^{2\eta}A(\psi)
-
\frac{1}{q}\rho^{q\theta\eta}B(\psi)
\right)
\right|_{\rho=1} \\
&=
\eta\left(A(\psi)-\theta B(\psi)\right).
\end{aligned}
\end{equation}
This motivates the definition of the Pohozaev functional
$$
P(\psi):=A(\psi)-\theta B(\psi).
$$

Now we show that $\omega>0$ is necessary for the existence of a nontrivial weak
solution of \eqref{eq:weak-form} satisfying the Pohozaev identity. Taking $\psi$ as a test function
in the weak formulation, we get
\begin{equation*}
A(\psi)-B(\psi)+\omega M(\psi)=0.
\end{equation*}
By the Pohozaev identity \eqref{eq:Pohozaev-identity},
$$
\omega M(\psi)
=
\frac{1-\theta}{\theta}A(\psi)\ge 0.
$$
If $\omega\le 0$, then $A(\psi)=0$, and hence $\psi=0$ in $\mathcal{X}$. 
Therefore, any nontrivial weak solution of \eqref{eq:weak-form} satisfying the Pohozaev identity must have
$$
\omega>0.
$$
The next lemma shows that every scaling fiber intersects the Pohozaev identity
in exactly one point, and that this point is the unique maximizer of the energy functional
along the fiber.

 \begin{lemma}\label{lem:Pohozaev projection general}
Assume that $q\theta>2$. Then for every $\psi\in\mathcal{S}_\lambda$, there exists a unique
\begin{equation}\label{eq:rho general}
\rho(\psi)
:=
\left(
\frac{A(\psi)}{\theta B(\psi)}
\right)^{\frac{1}{\eta(q\theta-2)}}
\end{equation}
such that
$$
P\left(\rho(\psi)\star\psi\right)=0.
$$
Moreover,
\begin{equation}\label{eq:max-rho general}
E\left(\rho(\psi)\star\psi\right)
=
\max_{\rho>0}E(\rho\star\psi)
=
\frac{q\theta-2}{2q\theta}
\theta^{-\frac{2}{q\theta-2}}
\frac{
A(\psi)^{\frac{q\theta}{q\theta-2}}
}{
B(\psi)^{\frac{2}{q\theta-2}}
}.
\end{equation}
\end{lemma}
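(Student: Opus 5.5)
The plan is to reduce everything to a one-variable calculus problem along the fiber, using the scaling identities already recorded after \eqref{eq:scaling general}. For $\psi\in\mathcal{S}_\lambda$ we have $\psi\neq0$ in $\mathcal{X}$, so $A(\psi)>0$. Indeed, $A(\psi)=0$ would make $\psi$ a polynomial, and then $M(\psi)=0$, exactly as in the proof of Theorem \ref{thm:critical}. Likewise $B(\psi)>0$: if $D^{s_1-r}\psi=0$, then $\psi$ is again a polynomial, which contradicts $M(\psi)=\lambda>0$.

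Since $A(\rho\star\psi)=\rho^{2\eta}A(\psi)$ and $B(\rho\star\psi)=\rho^{q\theta\eta}B(\psi)$, we get
\[
P(\rho\star\psi)=\rho^{2\eta}A(\psi)-\theta\rho^{q\theta\eta}B(\psi)=\rho^{2\eta}\bigl(A(\psi)-\theta\rho^{\eta(q\theta-2)}B(\psi)\bigr).
\]
Because $\eta>0$ and $q\theta>2$, the map $\rho\mapsto\rho^{\eta(q\theta-2)}$ is a strictly increasing bijection of $(0,\infty)$. Hence $P(\rho\star\psi)=0$ has exactly one solution, namely \eqref{eq:rho general}.

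For the maximization, set $g(\rho):=E(\rho\star\psi)=\frac12\rho^{2\eta}A-\frac1q\rho^{q\theta\eta}B$. Its derivative is $g'(\rho)=\eta\rho^{-1}P(\rho\star\psi)$. By the factorization above, $g'>0$ for $\rho<\rho(\psi)$ and $g'<0$ for $\rho>\rho(\psi)$. Therefore $\rho(\psi)$ is the unique global maximizer of $g$ on $(0,\infty)$; one can also see directly that $g(\rho)\to0$ as $\rho\to0^+$ and $g(\rho)\to-\infty$ as $\rho\to\infty$.

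It remains to evaluate $g$ at $\rho(\psi)$. There $\theta\rho^{q\theta\eta}B=\rho^{2\eta}A$, so
\[
g(\rho(\psi))=\Bigl(\frac12-\frac1{q\theta}\Bigr)\rho(\psi)^{2\eta}A(\psi)=\frac{q\theta-2}{2q\theta}\,\rho(\psi)^{2\eta}A(\psi).
\]
Substituting $\rho(\psi)^{2\eta}=\bigl(A/(\theta B)\bigr)^{2/(q\theta-2)}$ gives $\theta^{-2/(q\theta-2)}A^{1+2/(q\theta-2)}B^{-2/(q\theta-2)}$. Since $1+\frac{2}{q\theta-2}=\frac{q\theta}{q\theta-2}$, this is exactly \eqref{eq:max-rho general}.

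The only mildly delicate point is justifying $A(\psi),B(\psi)>0$ and the scaling laws within the realization $\mathcal{X}$; the scaling laws are already stated in the paper. Everything else is elementary.
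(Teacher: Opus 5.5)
Your proposal is correct and follows the paper's proof essentially step by step: positivity of $B(\psi)$ via the polynomial argument, the identity $g'(\rho)=\frac{\eta}{\rho}P(\rho\star\psi)$, the sign analysis giving a unique global maximizer, and the evaluation using $\theta\rho^{q\theta\eta}B(\psi)=\rho^{2\eta}A(\psi)$. Two steps the paper leaves implicit are made explicit in your version: the check that $A(\psi)>0$, which is needed for $\rho(\psi)>0$ and for the sign of $g'$ on $(0,\rho(\psi))$, and the factorization $P(\rho\star\psi)=\rho^{2\eta}\left(A(\psi)-\theta\rho^{\eta(q\theta-2)}B(\psi)\right)$, which proves uniqueness rather than asserting it.
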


\begin{proof}
First, we note that $B(\psi)>0$. Indeed, if $B(\psi)=0$, then
$$
D^{s_1-r}\psi=0.
$$
The case $s_1=r$ is straightforward. If $s_1>r$, then taking the Fourier transform gives
$$
|\xi|^{s_1-r}\mathcal{F}\psi=0
\qquad\text{in }\mathcal{S}'(\mathbb{R}^d).
$$
Since $\operatorname{supp}\mathcal{F}\psi\subset\{0\}$, it follows that $\psi$
is a polynomial. The integrability condition $D^r\psi\in L^p(\mathbb{R}^d)$,
together with $M(\psi)=\lambda>0$, rules this out. Thus,
$$
B(\psi)>0.
$$

For a given $\psi\in\mathcal{S}_\lambda$, we introduce
$$
g(\rho):=E(\rho\star\psi)=
\frac{1}{2}\rho^{2\eta}A(\psi)
-
\frac{1}{q}\rho^{q\theta\eta}B(\psi),
\qquad \rho>0.
$$
Differentiating, we obtain
$$
\begin{aligned}
g'(\rho)
&=
\eta\rho^{2\eta-1}A(\psi)
-
\theta\eta\rho^{q\theta\eta-1}B(\psi)\\
&=
\frac{\eta}{\rho}
\left(
\rho^{2\eta}A(\psi)
-
\theta\rho^{q\theta\eta}B(\psi)
\right)\\
&=
\frac{\eta}{\rho}P(\rho\star\psi).
\end{aligned}
$$
Therefore,
$$
g'(\rho)=0
\quad\Longleftrightarrow\quad
P(\rho\star\psi)=0.
$$
Since $q\theta>2$, this equation has the unique positive solution
$$
\rho(\psi)
=
\left(
\frac{A(\psi)}{\theta B(\psi)}
\right)^{\frac{1}{\eta(q\theta-2)}}.
$$
Moreover,
$$
g'(\rho)>0
\qquad\text{for all } 0<\rho<\rho(\psi),
$$
and
$$
g'(\rho)<0
\qquad\text{for all } \rho>\rho(\psi).
$$
Thus, $\rho(\psi)$ is the unique global maximizer of $g$.

It remains to compute the maximum. Since
$$
\rho(\psi)^{2\eta}A(\psi)
=
\theta\rho(\psi)^{q\theta\eta}B(\psi)
$$
and
$$
\rho(\psi)^{q\theta\eta}B(\psi)
=
\frac{1}{\theta}\rho(\psi)^{2\eta}A(\psi),
$$
we have
$$
\begin{aligned}
E(\rho(\psi)\star\psi)
&=
\frac{1}{2}\rho(\psi)^{2\eta}A(\psi)
-
\frac{1}{q}\rho(\psi)^{q\theta\eta}B(\psi)\\
&=
\left(\frac{1}{2}-\frac{1}{q\theta}\right)
\rho(\psi)^{2\eta}A(\psi)\\
&=
\frac{q\theta-2}{2q\theta}
\rho(\psi)^{2\eta}A(\psi).
\end{aligned}
$$
Consequently,
$$
E(\rho(\psi)\star\psi)
=
\frac{q\theta-2}{2q\theta}
\theta^{-\frac{2}{q\theta-2}}
\frac{
A(\psi)^{\frac{q\theta}{q\theta-2}}
}{
B(\psi)^{\frac{2}{q\theta-2}}
}.
$$
This completes the proof.
\end{proof}

The previous lemma motivates the following definition. Let
$$
\mathcal{P}_\lambda:=\left\{\psi \in \mathcal{S}_\lambda \mid P(\psi)=0\right\}.
$$
We call $\mathcal{P}_\lambda$ the Pohozaev manifold. By Lemma \ref{lem:Pohozaev projection general}, the set $\mathcal{P}_\lambda$ is nonempty.
Moreover, for every $\psi\in\mathcal{S}_\lambda$, the scaling fiber
$$
\{\rho\star\psi:\rho>0\}
$$
intersects $\mathcal{P}_\lambda$ at exactly one point.

Since
$$
A(\psi)=\theta B(\psi)
\qquad \text{on } \mathcal{P}_\lambda,
$$
the energy functional reduces to
$$
E(\psi)
=
\frac{q\theta-2}{2q\theta}A(\psi)\qquad \text{on } \mathcal{P}_\lambda.
$$
Using the sharp Gagliardo--Nirenberg inequality \eqref{eq:GN-sharp},
$$
A(\psi)
=
\theta B(\psi)
\le 
\theta C_*^q
A(\psi)^{\frac{\theta q}{2}}
\lambda^{\frac{(1-\theta)q}{p}}
$$
for all $\psi\in\mathcal{P}_\lambda$.
Then
\begin{equation}\label{eq:lower bound of E general}
E(\psi)
\ge 
\frac{q\theta-2}{2q\theta}
\left(
\frac{1}
{
\theta C_*^q
\lambda^{\frac{(1-\theta)q}{p}}
}
\right)^{
\frac{1}{\frac{\theta q}{2}-1}
}
>0
\end{equation}
for all $\psi\in\mathcal{P}_\lambda$. Therefore, it is natural to consider
\begin{equation*}
m_\lambda
:=
\inf_{\psi\in\mathcal{P}_\lambda}E(\psi).
\end{equation*}
Now let $\Phi\neq0$ be an optimizer for \eqref{eq:GN-sharp}.
We normalize $\Phi$ by setting
$$
\psi
:=
\left(
\frac{\lambda}{M(\Phi)}
\right)^{\frac{1}{p}}\Phi.
$$
Then $\psi\in\mathcal{S}_\lambda$, and $\psi$ is still an optimizer for \eqref{eq:GN-sharp}. By Lemma
\ref{lem:Pohozaev projection general}, there exists a unique number
$\rho(\psi)>0$ such that
$$
P\left(\rho(\psi)\star\psi\right)=0.
$$
Since the sharp Gagliardo--Nirenberg quotient \eqref{eq:best-const-general} is invariant under the scaling
$$
\psi\mapsto \rho\star\psi,
$$
it follows that
$$
\psi_\lambda:=\rho(\psi)\star\psi
$$
belongs to $\mathcal{P}_\lambda$, and is also an optimizer for \eqref{eq:GN-sharp}. Therefore, equality is attained in
\eqref{eq:lower bound of E general}, and hence
\begin{equation}\label{eq:E-at-minimizer-general}
E(\psi_\lambda)
=
\frac{q\theta-2}{2q\theta}
\left(
\frac{1}
{
\theta C_*^q
\lambda^{\frac{(1-\theta)q}{p}}
}
\right)^{
\frac{1}{\frac{\theta q}{2}-1}
}.
\end{equation}
From \eqref{eq:lower bound of E general}, we conclude that
$$
E(\psi_\lambda)=m_\lambda.
$$
The Pohozaev manifold $\mathcal{P}_\lambda$ was introduced as an auxiliary
constraint in order to overcome the fact that $E$ is unbounded from below on
$\mathcal{S}_\lambda$. We now show that this auxiliary constraint is natural:
every minimizer of $E$ on $\mathcal{P}_\lambda$ is automatically a constrained
critical point of $E$ on $\mathcal{S}_\lambda$. 

\begin{proof}[Proof of Theorem \ref{thm:supercritical}]
Let $\varepsilon>0$. We show that $\psi_\lambda$ is a constrained critical point of $E$
on $\mathcal{S}_\lambda$. Let
$$
\gamma:(-\varepsilon,\varepsilon)\to\mathcal{S}_\lambda
$$
be a $C^1$-curve such that
$$
\gamma(0)=\psi_\lambda.
$$
By Lemma \ref{lem:Pohozaev projection general}, for each $s$ sufficiently
close to $0$, there exists a unique $\rho(\gamma(s))>0$ such that
$$
\rho(\gamma(s))\star\gamma(s)\in\mathcal{P}_\lambda.
$$
Since $\psi_\lambda\in\mathcal{P}_\lambda$, uniqueness gives
$$
\rho(\gamma(0))=1.
$$
Recall that the map $\varphi\mapsto \rho(\varphi)$ is $C^1$. Hence,
$$
\widetilde\gamma(s):=\rho(\gamma(s))\star\gamma(s)
$$
is a $C^1$-curve in $\mathcal{P}_\lambda$. Moreover,
$\widetilde\gamma(0)=\psi_\lambda$. Since $\psi_\lambda$ minimizes
$E$ on $\mathcal{P}_\lambda$, the function
$$
s\mapsto E(\widetilde\gamma(s))
$$
has a minimum at $s=0$. Therefore, by the chain rule,
$$
\begin{aligned}
0
&=
\left.\frac{d}{ds}E(\widetilde\gamma(s))\right|_{s=0} \\
&=
\left.\frac{d}{d\rho}E(\rho\star\psi_\lambda)\right|_{\rho=1}
\left.\frac{d}{ds}\rho(\gamma(s))\right|_{s=0}
+
\left\langle E'(\psi_\lambda),\gamma'(0)\right\rangle.
\end{aligned}
$$
Since $\psi_\lambda\in\mathcal{P}_\lambda$, we have
$$
\left.\frac{d}{d\rho}E(\rho\star\psi_\lambda)\right|_{\rho=1}
=
\eta P(\psi_\lambda)
=
0.
$$
Therefore,
$$
\left\langle E'(\psi_\lambda),\gamma'(0)\right\rangle=0.
$$
Since $\gamma$ was an arbitrary $C^1$-curve in $\mathcal{S}_\lambda$ with
$\gamma(0)=\psi_\lambda$, it follows that $E'(\psi_\lambda)$ vanishes on
the tangent space to $\mathcal{S}_\lambda$ at $\psi_\lambda$.

\end{proof}
 
\section{Acknowledgments}
This research was funded by the Science Committee of the Ministry of Science and Higher Education of Kazakhstan (Grant No. AP26194665) and the Nazarbayev University Faculty Development Competitive Research Grants Program 040225FD4702. 

\section{Conflict of interest}
The authors declare that they have no conflict of interest.

\section{Data availability}
The manuscript has no associated data.

\addcontentsline{toc}{chapter}{Bibliography}
%uncomment next line to change bibliography name to references
%\renewcommand{\bibname}{References}
\bibliography{refs}      %use a bibtex bibliography file refs.bib

\bibliographystyle{abbrv}  %use the plain bibliography style unsrt apalike siam

\end{document}